\numberwithin{equation}{section}
\newtheorem{theorem}{Theorem}[section]
\newtheorem{corollary}[theorem]{Corollary}
\newtheorem{lemma}[theorem]{Lemma}
\newtheorem{proposition}[theorem]{Proposition}
\newtheorem{definition}[theorem]{Definition}
\newcommand{\N}{\mathbb{N}}
\newcommand{\be}{\begin{equation}}
\newcommand{\ee}{\end{equation}}
\newcommand{\R}{\mathbb{R}}
\renewcommand{\d}{{\mathrm d}}
\newcommand{\prob}[1]{\mathscr P(#1)}
\newcommand{\spt}{{\rm{spt}}}
\newcommand{\restr}[1]{\lower3pt\hbox{$|_{#1}$}}
\newcommand{\x}{\mathop{\textbf x}\nolimits}
\begin{document}

\title[Non-existence of optimal maps for the repulsive harmonic cost]
{Non-existence of optimal transport maps for the multi-marginal repulsive harmonic cost}

\author{Augusto Gerolin}
\author{Anna Kausamo}\author{Tapio Rajala}
\address{University of Jyvaskyla\\
         Department of Mathematics and Statistics \\
         P.O. Box 35 (MaD) \\
         FI-40014 University of Jyvaskyla \\
         Finland}
\email{augustogerolin@gmail.com}		 
\email{anna.m.kausamo@jyu.fi}
\email{tapio.m.rajala@jyu.fi}


\thanks{The authors acknowledge the Academy of Finland projects no. 274372, 284511 and 312488.}
\subjclass[2000]{}
\date{\today}

\begin{abstract}
We give an example of an absolutely continuous measure $\mu$ on $\R^d$, for any $d \ge 1$,
such that no minimizer of the $3$-marginal harmonic repulsive cost with all marginals equal to $\mu$ is supported on a graph over the first variable.
\end{abstract}

\maketitle

\section{Introduction}
Let $\mu_1,\dots,\mu_N$ be probability measures in $\R^d$, $c \colon(\R^d)^N\to\R$ be a cost function and denote by $\Gamma(\mu_1,\dots,\mu_N)$ the set of probability measures $\gamma\in \prob{\R^{dN}}$ having marginals $\mu_1,\mu_2,\dots,\mu_N$. We are interested in understanding the structure of the minimizers of the multi-marginal Monge-Kantorovich problem 
\be\label{MKintro}
\inf_{\gamma\in\Gamma(\mu_1,\dots,\mu_N)}\int_{\R^{dN}}c(x_1,\dots,x_N) \,\d\gamma(x_1,\dots,x_N).
\ee

The problem has been studied recently in the literature for different cost functions $c$  motivated by problems in economics \cite{BeiHenPen, BeiCoxHue, BeiJul}, physics \cite{CFK17, CoDePDMa, GorSeiVig, LanDMaGerLeeGor, DMaGerNenGorSei} and  mathematics \cite{AguCar, GaSw, MoaPass,Pass14}. We refer to \cite{PassSurvey} for an extensive discussion on multi-marginal optimal transport and to \cite{DMaGerNen} for an introduction to the topic and for references in the context of optimal transport and density functional theory. 

Among the cost functions that are considered in the literature, we concentrate in this paper to the attractive harmonic or Gangbo-Swiech cost $c_a$ \cite{AguCar,GaSw} and to the repulsive harmonic cost $c_w$ \cite{DMaGerNen, SeiGorSav},
\[
c_a(x_1,\dots,x_N) = \sum^N_{i=1}\sum^N_{j=i+1}\vert x_j-x_i\vert^2, \quad c_w(x_1,\dots,x_N) = -\sum^N_{i=1}\sum^N_{j=i+1}\vert x_j-x_i\vert^2.
\]

Although the case $N=2$ is well-understood for a wide class of cost functions depending on the distance, thanks to the celebrated Brenier's theorem \cite{Bre91}, only partial results are currently known for the general case $N\ge3$. In fact, when $N=2$, if $h:\R^d\to\R^{+}$ is a strictly convex function, $c(x_1,x_2) = h(\vert x_1-x_2\vert)$, and $\mu_1$ is absolutely continuous, then the problem \eqref{MKintro} admits a unique solution $\gamma = (\text{id},T)_{\sharp}\mu_1$, which is concentrated on the graph of a map $T:\R^d\to\R^d$. 

The main theorem of this paper is stated below. For simplicity, we denote by $\Gamma_N(\mu)$ the set of probability measures in $\R^{dN}$ with all the $N$ marginals equal to $\mu$.

\begin{theorem}\label{thm:main}
Given $d \in \N$ there exists an absolutely continuous measure $\mu$ in $\R^d$ with bounded support such that
 there is no minimizer for the quantity
 \[
  \min_{\gamma \in \Gamma_3(\mu)} \int_{\R^{3d}} -|x_1-x_2|^2-|x_1-x_3|^2-|x_2-x_3|^2\,\d \gamma(x_1,x_2,x_3)
 \]
 that is induced by a map from one of the marginals. Moreover, there is a unique symmetric minimizer. 
\end{theorem}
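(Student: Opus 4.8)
The plan is as follows.

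\emph{Step 1: reduce the cost.} Expanding the squares, for every $(x_1,x_2,x_3)$ one has
\[
-|x_1-x_2|^2-|x_1-x_3|^2-|x_2-x_3|^2 \;=\; |x_1+x_2+x_3|^2 - 3\bigl(|x_1|^2+|x_2|^2+|x_3|^2\bigr),
\]
and on $\Gamma_3(\mu)$ the last term is the constant $-9\int_{\R^d}|x|^2\,\d\mu$. Hence, after translating $\mu$ so that $\int x\,\d\mu=0$, minimizing the repulsive cost over $\Gamma_3(\mu)$ is the same as minimizing $J(\gamma):=\int_{\R^{3d}}|x_1+x_2+x_3|^2\,\d\gamma$. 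Since $\mu$ has bounded support, $\Gamma_3(\mu)$ is weakly compact and $J$ is weakly lower semicontinuous, so a minimizer exists; moreover $J\ge 0$, with $J(\gamma)=0$ precisely when $\gamma$ is concentrated on the plane $\Pi:=\{x_1+x_2+x_3=0\}$. Finally, since $J$ is linear and invariant under relabelling, averaging any minimizer over $S_3$ produces a symmetric minimizer, which already settles the existence half of the ``unique symmetric minimizer'' claim.

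\emph{Step 2: the measure.} I would exhibit an explicit absolutely continuous $\mu$ with bounded support (and, say, invariant under $x\mapsto-x$) for which some coupling $\gamma_0\in\Gamma_3(\mu)$ is concentrated on $\Pi$, so that $\min J=0$ and the set of minimizers is exactly $\{\gamma\in\Gamma_3(\mu):\ \gamma(\Pi)=1\}$. The model to keep in mind is $\mu=\psi_\sharp\lambda_{S^1}$, where $\lambda_{S^1}$ is normalized length on the circle and $\psi\colon S^1\to\R^d$ satisfies $\psi(\theta)+\psi(\theta+\tfrac{2\pi}{3})+\psi(\theta+\tfrac{4\pi}{3})\equiv 0$ (for $d=1$ take $\psi=\cos$, giving the arcsine law): then $\gamma_0=\bigl(\psi(\Theta),\psi(\Theta+\tfrac{2\pi}{3}),\psi(\Theta+\tfrac{4\pi}{3})\bigr)_\sharp\lambda_{S^1}$ does the job and is $S_3$-symmetric. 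Such a $\mu$ is supported on a curve, hence is \emph{not} absolutely continuous on $\R^d$, so the actual construction must ``thicken'' it (a thin tube, or a controlled convolution), and the first genuine task is to check that the thickened $\mu$ still carries a coupling concentrated on $\Pi$ while keeping the two remaining properties.

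\emph{Step 3: uniqueness of the symmetric minimizer.} Disintegrate a symmetric minimizer $\gamma$ along its first marginal, $\gamma=\int\delta_x\otimes\gamma_x\,\d\mu(x)$, where $\gamma_x$ lives on the zero-sum fibre $F_x:=\{(y,z)\in(\spt\mu)^2:\ x+y+z=0\}$. The construction should be arranged so that for $\mu$-a.e.\ $x$ the fibre $F_x$ is small and explicitly describable; then $S_3$-symmetry forces $\gamma_x$ to be one specific measure on $F_x$, and tracking this through the disintegration identifies $\gamma$ with $\gamma_0$.

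\emph{Step 4: no minimizer is a graph (the main obstacle).} Suppose a minimizer is $\gamma=(\operatorname{Id},T_2,T_3)_\sharp\mu$. Concentration on $\Pi$ forces $T_2(x)+T_3(x)=-x$ for $\mu$-a.e.\ $x$, while the marginal conditions give $(T_2)_\sharp\mu=(T_3)_\sharp\mu=\mu$; so one would have two $\mu$-measure-preserving maps with $T_2+T_3=-\operatorname{Id}$, i.e.\ a single $\mu$-preserving $T_2$ with $-\operatorname{Id}-T_2$ also $\mu$-preserving. The core of the argument is to show this is impossible for the chosen $\mu$: combining the two measure-preservation constraints on $T_2$ with the geometry of $\spt\mu$ should leave only the ``rotation'' candidate $T_2$, which fails to be a well-defined function of $x$ (for $d=1$ because $\cos$ is two-to-one). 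A graph over the second or third marginal is excluded identically by the $S_3$-symmetry of the problem. I expect Steps 2--4 to carry all the difficulty, and in particular the simultaneous verification that a fractional (indeed symmetric) optimal plan exists while no measure-preserving $T_2$ with $-\operatorname{Id}-T_2$ measure-preserving does; Step 1 and the symmetrization are routine.
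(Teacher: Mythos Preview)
Your Step~1 is correct and matches the paper's reduction. Beyond that, however, what you have written is a plan rather than a proof, and the plan contains an internal tension that would have to be resolved before it could succeed.

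The concrete difficulty is this: you put forward the arcsine law $\mu=\cos_\sharp\lambda_{S^1}$ on $[-1,1]$ as the model example in Step~2, and then in Step~3 you want to argue uniqueness of the symmetric minimizer by arranging that the fibres $F_x=\{(y,z)\in(\spt\mu)^2:\ x+y+z=0\}$ are ``small and explicitly describable''. But for the arcsine law $\spt\mu=[-1,1]$, and for every $x\in(-1,1)$ the fibre $F_x$ is a nondegenerate line segment, so there is a continuum of ways to distribute the conditional mass; nothing in your outline singles out a unique symmetric choice, and it is not even clear that uniqueness holds for this $\mu$. The same issue undermines Step~4: with one-dimensional fibres, the two-to-one nature of $\cos$ rules out only the specific ``rotation'' candidate for $T_2$, not all measure-preserving $T_2$ with $-\mathrm{Id}-T_2$ also measure-preserving. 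So your model example does not obviously have either of the two properties you need, and the unspecified ``thickening'' for $d>1$ makes the fibres $(2d-1)$-dimensional, which only makes matters worse.

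The paper resolves this by going in the opposite direction. Instead of a connected support with large fibres, it takes $\mu$ supported on five well-separated intervals $L^2,L^1,C,R^1,R^2$ with carefully tuned lengths and densities, so that the constraint $x_1+x_2+x_3=0$ together with $x_i\in\spt\mu$ forces a rigid combinatorial structure: up to permutation, one coordinate lies in $C$, one in $R^k$, and one in $L^k$ for the \emph{same} $k\in\{1,2\}$ (this is the key lemma). With that in hand the three-marginal problem collapses to a two-marginal problem between $\mu\restr{L}$ and $\mu\restr{R}$ (the $C$-coordinate being determined by the zero-sum condition), and the classical two-marginal Brenier theorem gives a unique optimal coupling there. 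Both the uniqueness of the symmetric minimizer and the non-existence of a graph minimizer fall out of this: the disintegration over a point of $C$ visibly charges several points (one pair for each $k$), so no minimizer can be a graph over the first variable, and by symmetry over none of the variables. The ingredient your outline is missing is precisely a construction with this kind of built-in finiteness of the fibres; without it, Steps~3 and~4 remain open.
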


We remark that when $\mu$ has finite second moments (or bounded support) and $N=2$ the problem \eqref{MKintro} corresponding to the cost function $c_w(x_1,x_2) = -\vert x_1-x_2\vert^2$ admits a unique solution $\gamma = (\text{id},G)_{\sharp}\mu$, where $G$ is the gradient of a concave function. This is a simple consequence of Brenier's theorem \cite{Bre91}. 

In \cite{MoaPass}, A. Moameni and B. Pass gave two general conditions on the cost functions in \eqref{MKintro} which ensure that any solution must concentrate on either finitely many or countably many graphs. Moreover, the authors exhibit two examples of cost functions and marginals $\mu_1,\mu_2,\mu_3$ such that there exists a unique minimizer which is concentrated in two graphs \cite[Example 4.2]{MoaPass} and \cite[Example 4.4]{MoaPass}. \medskip

\noindent
\textbf{A few words about the proof of Theorem \ref{thm:main}:} We will first construct a measure ${\mu}$ on $\R$ and after that show the general case $d\ge 1$. The measure ${\mu}$ will be defined as a sum ${\mu} = \mu^1 + \mu^2$ , where the measures $\mu^1$ and $\mu^2$ overlap so that this
forces the optimal couplings to be non-graphical. The support of each measure $\mu^k$ consists of three connected components that are separated so that any optimal coupling is forced to have one marginal in each of the components.  We use mainly two ingredients in the proof: (i) the fact that the support of an optimal transport $\gamma$ for the repulsive harmonic cost must be contained in the set $\lbrace x_1+\dots+x_N = k\rbrace$, where $k\in \R^d$; (ii) The classical $2$-marginal Brenier's theorem \cite{Bre91}. The main ideas are given in the proofs of Theorem \ref{thm:mainparts} and Lemma \ref{lma:discrete}. \medskip

\noindent
\textbf{Structure of the paper:} In Section \ref{sec:MOTAR}, we recall the main results regarding the attractive and repulsive harmonic cost. In particular, we show that in the $2$-marginal case the Monge-Kantorovich problem \eqref{MKintro} for both costs coincide under the hypothesis that the measure $\mu$ has finite second moments. Finally, in the Section \ref{sec:example}, we present the main construction of the paper, namely prove the counterexample for the existence of Monge-type solutions in the repulsive harmonic case.

\section{Multi-marginal optimal transport \\ for the attractive and repulsive harmonic costs}
\label{sec:MOTAR}

\subsection{The attractive harmonic cost}

In \cite{GaSw}, W. Gangbo and A. Swiech introduced the multi-marginal  optimal transport problem for the attractive harmonic cost (also known as the Gangbo-Swiech cost)
\be\label{costca}
c_a(x_1,\dots,x_N) =\sum^N_{i=1}\sum^N_{j=i+1}\vert x_i-x_j\vert^2,
\ee
and showed that the Monge-Kantorovich problem 
\be\label{MKAttractive}
\min_{\gamma\in\Gamma(\mu_1,\dots,\mu_N)} \int_{\R^{dN}}c_a(x_1,\dots,x_N)\,\d\gamma(x_1,\dots,x_N), 
\ee
admits a unique minimizer $\gamma_{opt}$ provided that the marginals $\mu_1,\dots,\mu_N$ have finite second moments and  are  not concentrated in \textit{small sets}. Moreover, $\gamma_{opt}$ is of Monge-type, that is, $\gamma_{opt} = (\text{id},T_1\dots,T_{N-1})_{\sharp}\mu_1$, where $T_i:\R^d\to\R^d$ satisfy $T_{i\sharp}{\mu_1} = \mu_i$, for all $i \in \{1,\dots,N-1\}$.

For the sake of completeness, we briefly present some heuristics on this result. First, by opening the squares of $c_a(x_1,\dots,x_N)$, we notice that studying the minimizers of \eqref{MKAttractive} is equivalent to studying the problem
\be\label{MKAttractive2}
\min_{\gamma\in\Gamma(\mu_1,\dots,\mu_N)} 2\int_{\R^{dN}}\sum^N_{i=1}\sum^N_{j=i+1}x_i\cdot x_j \,\d\gamma(x_1,\dots,x_N).
\ee

The main idea in \cite{GaSw} is to write the Kantorovich formulation of \eqref{MKAttractive2} as
\be\label{KAttractive}
\sup \bigg\lbrace \sum^N_{i=1}\int_{\R^d} u_i(x_i)\,\d\mu_i : \sum^N_{i=1} u_i(x_i) \leq \sum^N_{i=1}\sum^N_{j=i+1}x_i\cdot x_j \bigg\rbrace,
\ee
and to show that the above supremum is attained by some functions $u_i \in L^1(\mu_i)$. Then, letting $\gamma$ be an optimal transport plan for \eqref{MKAttractive2}, and using the optimality conditions in \eqref{KAttractive}, we see that the Kantorovich potentials $u_i$ must satisfy the equality
\be\label{eq:Koptcond}
u_1(x_1)+\cdots+u_N(x_N) =\sum^N_{i=1}\sum^N_{j=i+1}x_i\cdot x_j, \text{ on } \spt(\gamma).
\ee
Now, applying the operators $\nabla_j$, $j \in\{1,\dots,N\}$, on both sides of \eqref{eq:Koptcond}, one aims at solving the following system of equations
\[
\begin{cases}
\nabla u_1(x_1) = \sum^N_{i=2}x_i \\
\cdots \\
\nabla u_j(x_j) = \sum^N_{i=1,i\neq j}x_i \\
\cdots \\
\nabla u_N(x_N) = \sum^{N-1}_{i=1}x_i \\
\end{cases} \quad \text{ or, equivalently, } \quad \begin{cases}
 x_1 + \nabla u_1(x_1) = \sum^N_{i=1}x_i \\
\cdots \\
 x_j + \nabla u_j(x_j) = \sum^N_{i=1}x_i \\
\cdots \\
 x_N + \nabla u_N(x_N) = \sum^N_{i=1}x_i \\
\end{cases}, \medskip
\]
which is handily solvable provided that one can invert $\nabla_{x_j} (\vert \cdot \vert^2 + u_j)(x_j)$ for all $j\in\{2,\dots,N\}$.
Indeed, we have that $\nabla_{x_1} (\vert \cdot \vert^2 + u_1)(x_1) = \nabla_{x_j} (\vert \cdot \vert^2 + u_j)(x_j)$ for all $j \in\{2,\dots,N\}$. Therefore, $x_j = \nabla_{x_j}^{-1} (\nabla_{x_1}(\vert \cdot \vert^2 + u_1)(x_1)) =: T_j(x_1)$. In fact, one can show that the functions $u_i(x_i) + \frac{1}{2}\vert x_i\vert^2$ are strictly convex and thus $\nabla_{x_j}(\frac{1}{2}\vert \cdot\vert^2 + u_j)(x_j)$ admits an inverse which is its convex conjugate $\nabla^*_{x_j}$ (see \cite{GaSw}, for details). 

The above approach leads to the following result by Gangbo-Swiech \cite{GaSw}.
\begin{theorem}[Gangbo-Swiech, \cite{GaSw}]\label{thm_GS}
Let $\mu_1,\dots,\mu_N$ be non-negative Borel probability measures in $\R^d$ vanishing on $(d-1)-$rectifiable sets and having finite second moments, and let $c_a$ be the attractive harmonic cost \eqref{costca}. Then 
\begin{itemize}
\item[(1)] the problem \eqref{KAttractive} admits maximizers $u_i$ which are $\mu_i$-differentiable.
\item[(2)] There exists a unique minimizer $\gamma_{opt}$ in \eqref{MKAttractive} such that $\gamma_{opt} = (\text{id},T_1,\dots,T_{N-1})_{\sharp\mu_1}$, where $T_i\colon\R^d\to\R^d$ are defined by $T_i(x_1) = \nabla^*_{x_i}(\nabla_{x_1}(\vert \cdot\vert^2 + u_1)(x_1)$ for $i=2,\dots,N$. 
\end{itemize}
\end{theorem}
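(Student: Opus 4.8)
The plan is to follow Gangbo and Swiech \cite{GaSw} and make the heuristic computation above rigorous, in three parts: existence and convexity of the Kantorovich potentials, their $\mu_i$-a.e.\ differentiability, and the passage from the complementary slackness relation \eqref{eq:Koptcond} to the transport maps. Write $b(x) = \sum_{i<j}x_i\cdot x_j$. First I record the reduction already sketched: expanding the squares gives $\int c_a\,\d\gamma = (N-1)\sum_i\int|x_i|^2\,\d\mu_i - 2\int b\,\d\gamma$, and since the $\mu_i$ have finite second moments the first term is a finite constant independent of $\gamma$, so \eqref{MKAttractive} is equivalent to the bilinear problem \eqref{MKAttractive2}. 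Existence of a minimizer $\gamma_{opt}$ is then the direct method: $\Gamma(\mu_1,\dots,\mu_N)$ is tight, hence weakly sequentially compact, and $\gamma\mapsto\int c_a\,\d\gamma$ is lower semicontinuous and bounded below.

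For assertion (1), I would pass to the Kantorovich dual \eqref{KAttractive} and run the double-convexification ($c$-transform) procedure: replacing $u_1$ by its $c$-transform with respect to the remaining variables and iterating never decreases the dual functional and keeps the competitor admissible. Because $b$ is bilinear, for fixed values of the variables $x_k$, $k\neq i$, the map $x_i\mapsto b(x) - \sum_{k\neq i}u_k(x_k)$ is affine, so each resulting potential is, after the standard sign normalization, a supremum of affine functions; consequently $\tfrac12|x_i|^2 + u_i$ is convex, and in fact strictly convex. The nontrivial point is to check that these $c$-transforms are finite with controlled growth and lie in $L^1(\mu_i)$ when the marginals are unbounded, which is where the finite second moment hypothesis enters. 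Granting this, each $u_i$ is convex and hence differentiable outside a set that is $\sigma$-finite for $\mathcal H^{d-1}$ and contained in a countable union of $(d-1)$-rectifiable sets; since by hypothesis $\mu_i$ vanishes on such sets, $u_i$ is differentiable $\mu_i$-a.e., which is (1).

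For assertion (2), fix optimal potentials $u_i$ as above and let $\gamma$ be any minimizer; by complementary slackness the admissibility inequality from \eqref{KAttractive} holds everywhere while $\sum_i u_i(x_i) = b(x)$ on $\spt\gamma$. Since the $j$-th marginal of $\gamma$ is $\mu_j$ and $\mu_j$ vanishes on the non-differentiability set of $u_j$, a disintegration of $\gamma$ shows that for $\gamma$-a.e.\ $\bar x$ each coordinate $\bar x_j$ is a differentiability point of $u_j$; for such an $\bar x\in\spt\gamma$ the function obtained from $u_j(x_j) - b(x)$ by freezing $x_k = \bar x_k$, $k\neq j$, is differentiable and has an extremum at $x_j = \bar x_j$, so $\nabla u_j(\bar x_j) = \sum_{i\neq j}\bar x_i$, equivalently $\nabla(\tfrac12|\cdot|^2 + u_j)(\bar x_j) = \sum_i\bar x_i = \nabla(\tfrac12|\cdot|^2 + u_1)(\bar x_1)$ for all $j$. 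Strict convexity of $\tfrac12|\cdot|^2 + u_j$ makes $\nabla(\tfrac12|\cdot|^2 + u_j)$ injective, with inverse the gradient $\nabla^*_{x_j}$ of its Legendre conjugate, so $\bar x_j = \nabla^*_{x_j}\big(\nabla_{x_1}(\tfrac12|\cdot|^2 + u_1)(\bar x_1)\big) =: T_j(\bar x_1)$ for $\gamma$-a.e.\ $\bar x$. Thus $\gamma$ is concentrated on the graph of $(\Id, T_1,\dots,T_{N-1})$, hence $\gamma = (\Id, T_1,\dots,T_{N-1})_\sharp\mu_1$, and comparing marginals gives $T_{j\sharp}\mu_1 = \mu_{j+1}$. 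Since this argument used only the fixed potentials $u_i$, every minimizer is concentrated on the same graph and therefore coincides with $\gamma_{opt}$, which yields uniqueness.

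I expect the main obstacle to be the analytic heart of part (1) together with the legitimacy of differentiating the dual constraint along $\spt\gamma$ in part (2): one must produce genuinely finite, $\mu_i$-integrable optimal potentials for possibly unbounded marginals, and then combine the $\mu_i$-a.e.\ differentiability of $u_i$ (from convexity and the rectifiability hypothesis) with a disintegration of $\gamma$ to ensure that $\gamma$-a.e.\ point is simultaneously a differentiability point of all the $u_j$ and that the potentials are finite there. These measure-theoretic and regularity estimates are the substantive content of \cite{GaSw}; once they are available, inverting $\nabla(\tfrac12|\cdot|^2 + u_j)$ and assembling the maps $T_j$ is routine.
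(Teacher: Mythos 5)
Your outline follows essentially the same route the paper itself takes for this cited result: reduce to the bilinear problem, pass to the Kantorovich dual \eqref{KAttractive}, obtain (strict) convexity of $\tfrac12|x_i|^2+u_i$, differentiate the complementary-slackness identity \eqref{eq:Koptcond} on $\spt\gamma$, and invert via the Legendre conjugate to build the maps $T_i$; the paper presents exactly this as a heuristic and defers the analytic substance (finiteness, integrability and attainment of the dual potentials, and the a.e.\ differentiability bookkeeping) to \cite{GaSw}, just as you do. So the proposal is consistent with the paper's treatment, with the hard steps correctly identified and attributed rather than reproved.
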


Theorem \ref{thm_GS} was extended by H. Heinrich \cite{Hei2002}, allowing the inclusion of more general cost functions $c(x_1,\dots,x_N) = l(x_1+\cdots+x_N)$ with $l\colon\R^d\to\R^{+}$ concave. M. Agueh and G. Carlier \cite{AguCar} remarked that the attractive harmonic cost is equivalent to the so-called \textit{barycenter problem} in multi-marginal optimal transport, which has many applications in inverse problems in imaging sciences.  We refer to the survey \cite{PassSurvey} and the references therein for further applications and extensions of the attractive harmonic cost.

\subsection{The repulsive harmonic cost}
In this section, we consider cost functions of the type
\[
c_w(x_1,\dots,x_N) = \sum^N_{i=1}\sum^N_{j=i+1} -\vert x_i-x_j\vert^2,
\]
or, more generally, 
\[
c_h(x_1,\dots,x_N) = h(x_1+x_2+\cdots+x_N), \text{ where } h:\R^d\to\R^{+} \text{ is a convex function.}
\]  Costs in this class are said to be \textit{repulsive} because optimal transport plans must place points $x_i$ as far as possible from each other. We show in Subsection \ref{subsec:N=2}, that in the case where the number of marginals $N$ is equal to $2$, the attractive and repulsive costs are equivalent via Brenier's theorem. In particular, the Monge-Kantorovich problem associated with the repulsive harmonic cost admits a unique minimizer which is of Monge-type. 

This situation, however, changes drastically in the multi-marginal case where generally there is no hope of uniqueness of optimal transport plans for the Monge-Kantorovich problem associated with the cost $c_w$.  Moreover, examples of diffuse-like, fractal-like, and Monge-like solutions can be constructed \cite{DMaGerNen}. 

Although the most interesting cost in the class of repulsive costs is the Coulomb one, since it plays an important role in density functional theory \cite{GorSeiVig, SeiGorSav},  
the repulsive harmonic cost has been used as  a toy-model to study problems in that context.  As observed by physicists \cite{SeiGorSav, CorKarLanLee}, it  has several advantages compared to the Coulomb one, since easy Monge-type solutions can be constructed in any dimension $d$ for specific densities. Additionally, the repulsive harmonic cost admits a very simple characterization for optimality (see Proposition \ref{prop:optimalRA}). 
 
We now recall a few results and central examples in the repulsive harmonic cost case. First, we notice that the Monge-Kantorovich problem associated with $c_w$ is equivalent to the one associated with $c_h(x_1,\dots,x_N) = h(x_1+\dots+x_N)$ when $h(z) = \vert z\vert^2$.

\begin{proposition}
Assume that $\mu_1,\dots,\mu_N$ are probability measures in $\R^d$ with finite second moments. Then,
$$ \underset{\gamma\in\Gamma(\mu_1,\dots,\mu_N)}{\operatorname{argmin}}\int_{\R^{dN}} c_w(x_1,\dots,x_N)\,\d\gamma = \underset{\gamma\in\Gamma(\mu_1,\dots,\mu_N)}{\operatorname{argmin}}\int_{\R^{dN}} \vert x_1+\cdots+x_N\vert^2 \,\d\gamma. $$
\end{proposition}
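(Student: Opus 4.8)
The plan is to show that, pointwise on $(\R^d)^N$, the repulsive harmonic cost $c_w$ differs from $h(x_1+\cdots+x_N)=|x_1+\cdots+x_N|^2$ only by a term of the form $\sum_{i=1}^N g_i(x_i)$, and that the integral of such a term against any $\gamma\in\Gamma(\mu_1,\dots,\mu_N)$ is a finite constant depending only on the marginals. Once this is established, the two functionals in the statement differ by that additive constant on all of $\Gamma(\mu_1,\dots,\mu_N)$, so their sets of minimizers coincide.

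First I would expand the squares. Writing $s=x_1+\cdots+x_N$, one has
\be\label{eq:expand1}
|s|^2=\sum_{i=1}^N|x_i|^2+2\sum_{i=1}^N\sum_{j=i+1}^N x_i\cdot x_j,
\ee
while
\be\label{eq:expand2}
\sum_{i=1}^N\sum_{j=i+1}^N|x_i-x_j|^2=(N-1)\sum_{i=1}^N|x_i|^2-2\sum_{i=1}^N\sum_{j=i+1}^N x_i\cdot x_j.
\ee
Eliminating the common cross term $\sum_{i<j}x_i\cdot x_j$ between \eqref{eq:expand1} and \eqref{eq:expand2} yields the pointwise identity
\be\label{eq:pointwise}
c_w(x_1,\dots,x_N)=-\sum_{i=1}^N\sum_{j=i+1}^N|x_i-x_j|^2=|x_1+\cdots+x_N|^2-N\sum_{i=1}^N|x_i|^2.
\ee

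Next I would integrate \eqref{eq:pointwise} against an arbitrary $\gamma\in\Gamma(\mu_1,\dots,\mu_N)$. Since $\gamma$ has marginals $\mu_i$ and each $\mu_i$ has finite second moment, the function $(x_1,\dots,x_N)\mapsto\sum_{i=1}^N|x_i|^2$ is $\gamma$-integrable with
\[
\int_{\R^{dN}}\Big(N\sum_{i=1}^N|x_i|^2\Big)\,\d\gamma=N\sum_{i=1}^N\int_{\R^d}|x|^2\,\d\mu_i(x)=:C<\infty,
\]
a constant independent of $\gamma$. Moreover \eqref{eq:pointwise} gives $c_w\ge -N\sum_i|x_i|^2$, so $\int c_w\,\d\gamma\ge -C>-\infty$ and the subtraction is legitimate (with both sides allowed to equal $+\infty$ simultaneously). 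Hence
\[
\int_{\R^{dN}}c_w(x_1,\dots,x_N)\,\d\gamma=\int_{\R^{dN}}|x_1+\cdots+x_N|^2\,\d\gamma-C\qquad\text{for all }\gamma\in\Gamma(\mu_1,\dots,\mu_N),
\]
so the two functionals differ by the fixed constant $C$ on $\Gamma(\mu_1,\dots,\mu_N)$ and therefore have exactly the same minimizers.

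There is no serious obstacle in this argument; the only point requiring care is the integrability and finiteness of the term $\sum_{i=1}^N|x_i|^2$, which is precisely what the finite second moment hypothesis guarantees and is the reason it appears in the statement.
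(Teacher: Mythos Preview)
Your argument is correct. The paper does not actually write out a proof of this proposition; it is stated as a recalled fact, and the only hint is the earlier remark in the attractive case that ``by opening the squares of $c_a(x_1,\dots,x_N)$'' the problem reduces to minimizing $\int\sum_{i<j}x_i\cdot x_j\,\d\gamma$. Your expansion \eqref{eq:pointwise} is precisely this opening-the-squares computation carried through to the repulsive cost, and your handling of the integrability via the finite second moment hypothesis is the right justification for subtracting the marginal-dependent constant. So your proof is exactly the intended one, just made explicit.
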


As noticed in \cite{DMaGerNen}, the Monge-Kantorovich problem for cost functions $c_h$ have a simple characterization.

\begin{proposition}[\cite{DMaGerNen}]
\label{prop:optimalRA}
Assume that $\mu_1,\dots,\mu_N$ are probability measures in $\R^d$ with finite second moments and $c_h(x_1,\dots,x_N) = h(x_1+\cdots + x_N)$, where $h\colon\R^d\to \R^+$ is a convex function. Then, if there exists a plan $\gamma$ concentrated in some hyperplane $\lbrace x_1+\cdots+x_N = k\rbrace$, $k\in\R^d$, then $\gamma$ is an optimal transport plan for the problem
\be\label{MKRepulsiveHarmonic}
\min_{\gamma\in\Gamma(\mu_1,\dots,\mu_N)}\int_{\R^{dN}} h(x_1+\cdots+x_N) \,\d\gamma(x_1,\dots,x_N).
\ee
In this case, $\tilde{\gamma} \in \Gamma(\mu_1,\dots,\mu_N)$ is optimal in \eqref{MKRepulsiveHarmonic}, if and only if,  
\begin{equation}\label{eq:optimality}
\spt(\tilde{\gamma}) \subset \lbrace (x_1,\dots,x_N) \in \R^{dN} \,|\, x_1+\cdots +x_N = k \rbrace. 
\end{equation}
In particular, the constant vector $k$ can be computed explicitly as
\[
k = h\left(\sum^N_{j=1}\int x \,\d\mu_j(x)\right).
\]
\end{proposition}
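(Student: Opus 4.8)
The plan is to combine Jensen's inequality with the elementary fact that, for couplings with prescribed marginals, the law of the sum $x_1+\cdots+x_N$ always has the same barycenter.

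First I would record this invariant: writing $\Sigma(x_1,\dots,x_N)\defeq x_1+\cdots+x_N$, for every $\gamma\in\Gamma(\mu_1,\dots,\mu_N)$ the pushforward $\Sigma_\sharp\gamma\in\prob{\R^d}$ has barycenter
\[
\int_{\R^d} z\,\d(\Sigma_\sharp\gamma)(z)=\int_{\R^{dN}}(x_1+\cdots+x_N)\,\d\gamma=\sum_{j=1}^N\int_{\R^d}x\,\d\mu_j(x)=:m,
\]
which does not depend on $\gamma$ (the finite second moments ensure $m$ exists). Since $h\ge 0$ is convex, Jensen's inequality gives
\[
\int_{\R^{dN}} h(x_1+\cdots+x_N)\,\d\gamma=\int_{\R^d} h(z)\,\d(\Sigma_\sharp\gamma)(z)\ \ge\ h(m)
\]
for every such $\gamma$, so $h(m)$ is a lower bound for \eqref{MKRepulsiveHarmonic} that is independent of the competitor.

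Next I would plug in the hypothesis. If $\gamma$ is concentrated on $\{x_1+\cdots+x_N=k\}$ then $\Sigma_\sharp\gamma=\delta_k$; its barycenter is $k$, so comparison with the display above forces $k=m$, and the cost of $\gamma$ equals $h(k)=h(m)$. Hence $\gamma$ attains the lower bound and is optimal, the minimum in \eqref{MKRepulsiveHarmonic} equals $h(m)$, and the constant is determined as $k=m=\sum_{j=1}^N\int x\,\d\mu_j(x)$. The same computation shows conversely that any $\tilde\gamma$ with $\spt\tilde\gamma\subset\{x_1+\cdots+x_N=k\}$ is optimal, giving one implication of the stated equivalence. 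For the other implication, optimality of $\tilde\gamma$ forces equality in Jensen's inequality for $\Sigma_\sharp\tilde\gamma$, i.e. $\int h\,\d(\Sigma_\sharp\tilde\gamma)=h\big(\int z\,\d(\Sigma_\sharp\tilde\gamma)\big)$.

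The only delicate point is this equality case. When $h$ is strictly convex --- which is exactly the situation of interest, since by the preceding reduction the repulsive harmonic cost corresponds to $h(z)=|z|^2$ --- equality in Jensen's inequality forces $\Sigma_\sharp\tilde\gamma$ to be a Dirac mass at its barycenter, hence $\Sigma_\sharp\tilde\gamma=\delta_m=\delta_k$ and $\spt\tilde\gamma\subset\{x_1+\cdots+x_N=k\}$, which closes the argument. (For $h$ merely convex one can only conclude that $\Sigma_\sharp\tilde\gamma$ is supported on an affine piece on which $h$ is linear, so the ``iff'' is genuinely a statement about strictly convex $h$; this is harmless for the applications here.) A minor technical aside: since $h\ge 0$ all the integrals above are well defined in $[0,\infty]$, and for $h(z)=|z|^2$ the finite second moments give $x_1+\cdots+x_N\in L^2(\gamma)$, so the inequality is between finite quantities and there is no integrability obstruction.
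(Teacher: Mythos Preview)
The paper does not supply its own proof of this proposition; it is quoted from \cite{DMaGerNen}. Your Jensen--inequality argument is the standard one and is correct: the barycenter $m=\sum_{j}\int x\,\d\mu_j$ of $\Sigma_\sharp\gamma$ is independent of $\gamma$, Jensen gives the lower bound $h(m)$, and a plan concentrated on the hyperplane $\{\Sigma=k\}$ attains it while forcing $k=m$.

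Two remarks are worth keeping. First, you are right that the ``only if'' direction of the equivalence genuinely needs strict convexity of $h$ to pass from equality in Jensen to $\Sigma_\sharp\tilde\gamma=\delta_m$; the hypothesis in the statement says only ``$h$ convex'', so your caveat is well placed (and, as you note, irrelevant for the application $h(z)=|z|^2$). Second, your computation actually corrects a misprint in the displayed formula for $k$: since $k\in\R^d$ while $h$ takes values in $\R^+$, the expression $k=h\bigl(\sum_j\int x\,\d\mu_j\bigr)$ cannot be right; the correct identity, which is exactly what your barycenter argument yields, is $k=\sum_{j=1}^N\int x\,\d\mu_j(x)$.
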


In the multi-marginal setting, the optimality condition 
\[\spt(\gamma) \subset \left\{(x_1,\ldots,x_N) \in \R^{dN} ~|~ x_1+\dots+x_N = k\right\}\] is much easier to handle than the $c$-cyclical monotonicity and the optimality conditions given by the Kantorovich problem. 

We list some examples of minimizers in \eqref{MKRepulsiveHarmonic} that can be constructed via Proposition \ref{prop:optimalRA}. An interesting case for repulsive costs is when all the $N$ marginals are the same and given by an absolutely continuous measure $\mu = \rho\mathcal{L}^d$. \medskip

\begin{itemize}

\item[-] \textit{Diffuse optimal plan or ``fat'' plan:} 

\noindent
Suppose $d=1$, $N=3$ and the measure $\mu$ is defined by $\mu=\frac 12 \mathcal{L}|_{[-1,1]}$. Let $\gamma =\frac 12 \mathcal{H}^{2}|_{ H}  g(\max \{ |x_1| , | x_2| , |x_3| \}) $, where $H$ is defined as $H= \{ x_1+x_2+x_3=0 \} \cap \{ |x| \leq 1, |y| \leq 1, |z| \leq 1 \}$, $g(x) = \frac{\sqrt{3}}{6}x$ and $\mathcal{H}^2$ denotes the $2$-dimensional Hausdorff measure. Then, by Proposition \ref{prop:optimalRA}, $\gamma$ is an optimal plan, since $\gamma \in \Pi_3(\mu)$ and $k= 3\int x d\mu = 0$. Moreover, this plan is not concentrated on a graph of a map \cite{DMaGerNen}. \bigskip

\item[-] \textit{Fractal-like optimal plan concentrated on a graph of a map \cite{DMaGerNen}:} 

\noindent
Let $\mu = \mathcal{L}^d|_{[0,1]^d}$ be the uniform measure in the $d$-dimensional unit cube and $N\geq 3$. Every point $z\in[0,1]$ can be represented in its $N$-th base. Namely, $z=\sum^{\infty}_{k=1}a_k/N^k$ with $a_k \in \lbrace 0,\dots,N-1\rbrace$. We define a map $T$ by permuting  the $N$ symbols of $a_k$.
\begin{align*}
T:z\in[0,1]\mapsto T(z) = \sum^{\infty}_{k=1} \mathcal{S}(a_k)/N^K \in [0,1], \\
\text{ where } \mathcal{S}(i) = i+1 \text{ and } \mathcal{S}(N-1) = 0.  
\end{align*}
\noindent
One can show that \cite{DMaGerNen}, if $x_1 = (z_1,\dots,z_N)$ the map $\tilde{T}(x_1) = (T(z_1),T(z_2),\dots,T(z_N))$ preserves the uniform measure in the $d$-dimensional cube and has the property that 
\[
x_1 + \tilde T(x_1) + \cdots + \tilde T^{(N-1)}(x_N) = N/2 = N\left(\int_{\R^d}x_1\,\d\mu(x_1)\right)^2.
\]
Therefore, by Proposition \ref{prop:optimalRA} the plan $\gamma = (\text{id}, \tilde T, \dots,\tilde T^{(N)})_{\sharp}\mu$ is optimal. The map $\tilde{T}$ is not continuous at any point \cite{DMaGerNen}. \bigskip

\item[-] \textit{Optimal plan concentrated on the graph of a regular cyclical map $(N \text{ even})$:}
\noindent
Suppose that, for all $i=1,\dots,2k$, $\mu_i = \mu=\rho\mathcal{L}^d$ is such that $\rho(\vert x\vert) = \rho(x)$ for all $x\in\R^d$. Then $\gamma(x_1,\dots,x_{2k}) = (\text{id},T,T^{(2)},\dots, T^{(2k-1)})_{\sharp}\mu$ where $T\colon x\in\R^d \mapsto -x \in \R^d$ is an optimal plan for \eqref{MKRepulsiveHarmonic}. In fact, 
$$N\int_{\R^d}x \rho(x)\,\d x = 0$$
and 
$$x + T(x) + T^{(2)}(x) + \cdots + T^{(N-1)}(x) = 0.$$
\end{itemize}

\subsection{Discussion on the $N=2$ case and Brenier's Theorem}\label{subsec:N=2}

We start by recalling the classical Brenier's theorem in optimal transport. Notice that the attractive harmonic cost is a natural generalization of the distance squared cost when $N>2$.

\begin{theorem}[Brenier, \cite{Bre91}]
Let $\mu_1$ and $\mu_2$ be Borel probability measures in $\R^d$ with finite second moments. Assume that $\mu_1$ is absolutely continuous with respect to the Lebesgue measure. Then the problem
\[
\min_{\gamma\in\Gamma(\mu_1,\mu_2)}\int_{\R^d\times\R^d}\vert x_1-x_2\vert^2 \,\d\gamma(x_1,x_2),
\]
admits a unique minimizer $\gamma_{opt} = (\text{id},T)_{\sharp} \mu_1$, where $T(x_1) = \nabla \phi(x_1)$, and $\phi\colon\R^d\to\R$ is a convex function. As a consequence, the Monge problem with squared distance cost 
\[
\min \left\lbrace \int_{\R^d}\vert x_1-T(x_1)\vert^2\,\d\mu_1(x_1) \,:\, T\colon \R^d\to\R^d, T_{\sharp}\mu_1=\mu_2\right\rbrace, 
\]
admits a unique minimizer.
\end{theorem}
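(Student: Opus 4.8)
The plan is to run the classical cyclical-monotonicity and Rockafellar argument. \emph{Step 1: reduction and existence.} Expanding $|x_1-x_2|^2 = |x_1|^2 + |x_2|^2 - 2\langle x_1,x_2\rangle$ and using that $\int|x_1|^2\,\d\gamma=\int|x_1|^2\,\d\mu_1$ and $\int|x_2|^2\,\d\gamma=\int|x_2|^2\,\d\mu_2$ are finite constants independent of $\gamma\in\Gamma(\mu_1,\mu_2)$ (this is where the finite-second-moment hypothesis enters), minimizing the quadratic cost is equivalent to maximizing the linear functional $\gamma\mapsto\int\langle x_1,x_2\rangle\,\d\gamma$. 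A maximizer $\gamma_{opt}$ exists by the direct method: $\Gamma(\mu_1,\mu_2)$ is tight because its marginals are fixed, hence weakly sequentially precompact by Prokhorov's theorem, it is weakly closed, and since $(x_1,x_2)\mapsto|x_1-x_2|^2$ is nonnegative and lower semicontinuous the Kantorovich cost is weakly lower semicontinuous.

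\emph{Step 2: cyclical monotonicity and Rockafellar.} Next I would show that $\gamma_{opt}$ is concentrated on a Borel set $\Gamma\subset\R^d\times\R^d$ that is cyclically monotone, i.e.\ for all $(x_1,y_1),\dots,(x_n,y_n)\in\Gamma$ and all permutations $\sigma$ one has $\sum_i\langle x_i,y_i\rangle\ge\sum_i\langle x_i,y_{\sigma(i)}\rangle$. The standard proof is by contradiction: if the inequality failed along a cycle one could move an $\varepsilon$ of mass around that cycle to produce a competitor in $\Gamma(\mu_1,\mu_2)$ with strictly larger $\int\langle x_1,x_2\rangle\,\d\gamma$. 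Then Rockafellar's theorem provides a proper, lower semicontinuous convex function $\phi\colon\R^d\to\R\cup\{+\infty\}$ with $\Gamma\subset\{(x,y):y\in\partial\phi(x)\}$; concretely one may take, with a fixed base point $(x_0,y_0)\in\Gamma$,
\[
\phi(x)=\sup\Big\{\langle x-x_n,y_n\rangle+\langle x_n-x_{n-1},y_{n-1}\rangle+\cdots+\langle x_1-x_0,y_0\rangle:\ (x_i,y_i)\in\Gamma\Big\}.
\]
Thus $\gamma_{opt}$ is concentrated on the graph of the subdifferential $\partial\phi$.

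\emph{Step 3: differentiability, uniqueness, and the Monge corollary.} Since $\gamma_{opt}$ lives on $\{(x,y):y\in\partial\phi(x)\}$, its first marginal $\mu_1$ is concentrated on the convex set $\{x:\partial\phi(x)\neq\emptyset\}\subset\{\phi<+\infty\}$, and a convex function fails to be differentiable only on a Lebesgue-null subset of its domain; because $\mu_1\ll\mathcal L^d$ this set is $\mu_1$-negligible. Hence $\partial\phi(x)=\{\nabla\phi(x)\}$ for $\mu_1$-a.e.\ $x$, so $\gamma_{opt}=(\text{id},T)_\sharp\mu_1$ with $T:=\nabla\phi$ the gradient of a convex function, and the second-marginal constraint reads $T_\sharp\mu_1=\mu_2$. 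For uniqueness, if $\gamma_0,\gamma_1$ are both optimal then so is $\tfrac12(\gamma_0+\gamma_1)$ (the objective is affine in $\gamma$), hence it too is concentrated on a graph; writing $\gamma_j=(\text{id},T_j)_\sharp\mu_1$, the disintegration of $\tfrac12(\gamma_0+\gamma_1)$ over $\mu_1$ is $x\mapsto\tfrac12(\delta_{T_0(x)}+\delta_{T_1(x)})$, which is a Dirac mass for a.e.\ $x$ only if $T_0=T_1$ $\mu_1$-a.e., so $\gamma_0=\gamma_1$. Finally, every admissible Monge map $S$ induces a plan $(\text{id},S)_\sharp\mu_1\in\Gamma(\mu_1,\mu_2)$ with the same cost $\int|x-S(x)|^2\,\d\mu_1$, so the Monge infimum dominates the Kantorovich minimum; the latter is realized by the map $T$, whence they coincide and $T$ is a Monge minimizer, unique because any other admissible map realizing the value would induce a second optimal Kantorovich plan, contradicting the uniqueness just proved.

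I expect the main obstacle to be Step 2 — establishing that an optimal plan concentrates on a cyclically monotone \emph{Borel} set, and then applying Rockafellar's construction correctly, that is, verifying that the chain supremum above defines a function which is not identically $+\infty$, is convex and lower semicontinuous, and satisfies $\Gamma\subset\{(x,y):y\in\partial\phi(x)\}$. The remaining ingredients — finiteness of the quadratic constants, weak lower semicontinuity of the cost, the almost-everywhere differentiability of convex functions, and the averaging trick for uniqueness — are routine under the stated hypotheses.
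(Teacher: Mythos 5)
The paper does not prove this statement at all: it is quoted as the classical Brenier theorem and attributed to \cite{Bre91}, so there is no internal proof to compare against. Your outline is the standard (and correct) route to that classical result: reduce to maximizing $\int\langle x_1,x_2\rangle\,\d\gamma$ using the finite second moments, get existence by Prokhorov plus lower semicontinuity, show any optimal plan is concentrated on a cyclically monotone set, apply Rockafellar to obtain a convex potential, use that a convex function is differentiable outside a Lebesgue-null set together with $\mu_1\ll\mathcal L^d$ to get a graph, and conclude uniqueness by the averaging/disintegration trick and the Monge statement by comparing values. The only step that genuinely needs care is the one you flag yourself, namely upgrading the heuristic ``move $\varepsilon$ of mass around a bad cycle'' to a rigorous proof that the support (or a full-measure Borel set) is cyclically monotone --- e.g.\ via a localized product-measure exchange around points of the support, or via Kantorovich duality --- and, in the uniqueness step, the observation that the graphical representation applies to \emph{every} optimal plan (including the average), which your Steps 2--3 indeed furnish; with those details filled in, the argument is complete and consistent with the theorem as stated in the paper.
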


We call the optimal map $T$ the \textit{Brenier's map}. In the two marginal case,
a solution of the Monge-Kantorovich problem for the repulsive harmonic cost is an immediate consequence of Brenier's theorem. 

\begin{corollary}\label{cor:RAN2}
Let $\mu_1$ and $\mu_2$ be Borel probability measures in $\R^d$ having finite second moments. Assume that $\mu_1$ is absolutely continuous with respect to the Lebesgue measure. Then the problem
\[
\min_{\gamma\in\Gamma (\mu_1,\mu_2)}\int_{\R^d\times\R^d}-\vert x_1-x_2\vert^2 \,\d\gamma(x_1,x_2),
\]
admits a unique minimizer $\gamma_{opt} = (\text{id},T)_{\sharp}\mu_1$, where $T(x_1) = \nabla \psi(x_1)$, and $\psi:\R^d\to\R$ is a concave function.
\end{corollary}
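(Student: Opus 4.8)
The plan is to reduce the repulsive two-marginal problem directly to the attractive one by expanding the square, exactly as in the heuristic for the Gangbo-Swiech case but now with only two variables, where everything is elementary. First I would observe that for any $\gamma \in \Gamma(\mu_1,\mu_2)$ one has
\[
\int_{\R^d\times\R^d} -|x_1-x_2|^2 \,\d\gamma = -\int |x_1|^2\,\d\mu_1 - \int |x_2|^2\,\d\mu_2 + 2\int x_1\cdot x_2 \,\d\gamma,
\]
where the first two terms are finite (finite second moments) and independent of $\gamma$. Hence minimizing the repulsive cost is equivalent to minimizing $\int x_1\cdot x_2\,\d\gamma$, i.e.\ to \emph{maximizing} $\int -x_1\cdot x_2\,\d\gamma = \tfrac12\int(|x_1-x_2|^2 - |x_1|^2 - |x_2|^2)\,\d\gamma$, which in turn (dropping the constant terms again) is equivalent to maximizing $\int |x_1-x_2|^2\,\d\gamma$, or — the cleanest formulation — to \emph{minimizing} $\int |x_1 - (-x_2)|^2\,\d\gamma$ over couplings of $\mu_1$ and $\mu_2$.

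Next I would push this observation through the reflection map. Let $R\colon\R^d\to\R^d$, $R(x) = -x$, and let $\nu_2 := R_\sharp \mu_2$; since $\mu_2$ has finite second moments so does $\nu_2$, and $R$ is a bijection carrying couplings of $(\mu_1,\mu_2)$ to couplings of $(\mu_1,\nu_2)$ via $(\Id\times R)_\sharp$, with the cost $-|x_1-x_2|^2$ transported to $-|x_1+x_2|^2 = |x_1-x_2|^2 - 2|x_1|^2 - 2|x_2|^2$... more simply, under this change of variables $\int -|x_1-x_2|^2\,\d\gamma$ becomes $\int -|x_1 - (-y_2)|^2$ with $y_2 = -x_2$, and since minimizing $-|x_1-x_2|^2$ against a sign flip is the same as minimizing $|x_1 - y_2|^2$ up to the fixed constants above, Brenier's theorem applies to the pair $(\mu_1,\nu_2)$ with the squared-distance cost. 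It yields a unique optimal $\tilde\gamma_{opt} = (\Id, \nabla\phi)_\sharp\mu_1$ with $\phi$ convex and $(\nabla\phi)_\sharp\mu_1 = \nu_2$. Pulling back, the unique minimizer for the repulsive cost is $\gamma_{opt} = (\Id, R\circ\nabla\phi)_\sharp\mu_1 = (\Id, T)_\sharp\mu_1$ with $T = -\nabla\phi = \nabla(-\phi)$ and $\psi := -\phi$ concave, $T_\sharp\mu_1 = \mu_2$.

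Finally I would check that the bijection $(\Id\times R)_\sharp$ really transports the optimization problem faithfully: it is affine, maps $\Gamma(\mu_1,\mu_2)$ onto $\Gamma(\mu_1,\nu_2)$, and changes the objective by an additive constant depending only on the marginals, so minimizers correspond to minimizers and uniqueness is preserved. Uniqueness of $\gamma_{opt}$ then follows from uniqueness in Brenier's theorem; that it is induced by a map follows because $R\circ\nabla\phi$ is a genuine map defined $\mu_1$-a.e. The only point requiring a small amount of care — and the closest thing to an obstacle — is bookkeeping the constant terms and the sign so that ``minimize $-|x_1-x_2|^2$'' lands exactly on ``minimize $|x_1-y_2|^2$'' and not, say, on a maximization of the distance-squared cost (for which Brenier gives no map in general); this is resolved by the reflection $R$, which converts the relevant sign without altering absolute continuity of the first marginal. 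Everything else is a direct quotation of Brenier's theorem.
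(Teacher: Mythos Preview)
Your proposal is correct and follows essentially the same route as the paper: reflect the second variable via $R(x)=-x$ to convert the repulsive cost into the attractive squared-distance cost (up to an additive constant depending only on the marginals), apply Brenier's theorem to $(\mu_1,R_\sharp\mu_2)$, and pull back to obtain $T=\nabla(-\phi)$ with $-\phi$ concave. The only difference is cosmetic: you carry out the reduction at the level of plans via the bijection $(\mathrm{Id}\times R)_\sharp$, whereas the paper writes the identity directly at the level of transport maps.
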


\begin{proof}
First notice that, since $\mu_1$ and $\mu_2$ have finite second moments,
\begin{align*}
2\int_{\R^d}\vert x_1\vert^2\,\d\mu_1 + & 2\int_{\R^d}\vert x_2\vert^2\,\d\mu_2 + \min_{T_{\sharp}\mu_1=\mu_2} \int_{\R^d} -\vert x_1-T(x_1)\vert^2\,\d\mu_1 \\
& = \min_{G_{\sharp}\mu_1 =\tilde{\mu_2}} \int_{\R^d} \vert x_1-G(x_1)\vert^2\,\d\mu_1,
\end{align*}
where $G=-T$ and $\tilde{\mu_2} = (-\text{id})_{\sharp}\mu_2$. By Brenier's theorem the problem on the right-hand side has a unique solution $G = \nabla \phi$, with $\phi\colon\R^d\to\R$ a convex function. Then, $T = \nabla \psi$ is an optimal map for the repulsive harmonic cost, which is the gradient of a concave function $\psi = -\phi$.
\end{proof}

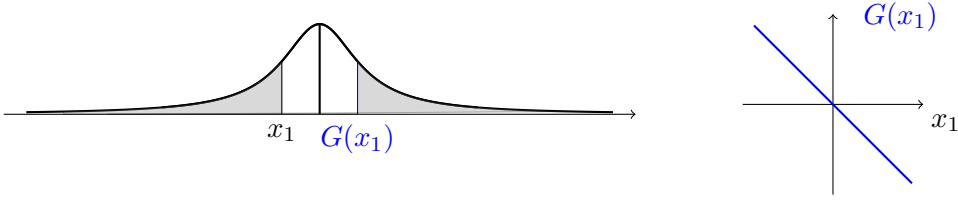
\begin{figure}[http]\label{fig.lorenzian2d}
\begin{minipage}{.60\textwidth}
\begin{center}
\begin{tikzpicture}[scale=0.6]
\draw [smooth, samples=100, domain=-6.5:6.5,thick] plot(\x,{2/((\x)*(\x)+1)});
\draw[->] (-7,0) -- (7,0);

  \def\offset{15};

\foreach \x in {1} {\draw[ thick] ( { tan( -90 +180*(\x)/2 )  },0) -- ( { tan( -90 +180*(\x)/2 )  },{2/(tan( -90 +180*(\x)/2 )*tan( -90 +180*(\x)/2 ) +1 )});}
\draw [smooth, samples=100, domain=-3.5:3.5,thick] plot(\x,{2/((\x)*(\x)+1)});

\draw[black] ( { tan( -95+\offset+80/2 )  },0) node[below] {$x_1$} -- ( { tan( -95 +\offset +80/2 )  },{2/(tan( -95 +\offset +80/2)*tan( -95 +\offset +80/2) +1)});

\draw[blue] ( { tan( 95-\offset-80/2 )  },0) node[below] {$G(x_1)$} -- ( { tan( 95 -\offset -80/2 )  },{2/(tan( 95 -\offset -80/2)*tan( 95 -\offset -80/2) +1)});

\draw[fill=black!50, opacity=0.3] plot[smooth, samples=100, domain= {tan( -95+\offset )}:{ tan(-95+\offset+80/2)}] ( \x,{2/((\x)*(\x)+1)}) |- ({tan(-95+\offset)},0) -- cycle;

\draw[fill=black!50, opacity=0.3] plot[smooth, samples=100, domain= {tan( -95+\offset )}:{ tan(-95+\offset+80/2)}] ( -\x,{2/((\x)*(\x)+1)}) |- ({tan(-93+\offset)},0) -- cycle;

\end{tikzpicture}
\end{center}
\end{minipage}
\begin{minipage}{.3\textwidth}
\begin{center}
\begin{tikzpicture}[scale=0.3]
\draw[->] (-4,0) -- (4,0);
\draw[->] (0,-4) -- (0,4);

  \def\offset{15};

\draw ({5},0) node[below] {$x_{1}$}; 
\draw[blue] ({3},5) node[below] {$G(x_{1})$};

\draw [blue, smooth, samples=100, domain=-3.50:-0.01,thick] plot(\x,{(-\x)});
\draw [blue, smooth, samples=100, domain=0.01:3.5,thick] plot(\x,{(-\x)});
\end{tikzpicture}
\end{center}
\end{minipage}
\caption{Example of an optimal map given by Corollary \ref{cor:RAN2} in the one-dimensional case $d=1$: $\mu = \mu_1 = \mu_2$ is the Gaussian-like shape density in the left-side picture. The map $G\colon\R\to\R$  is the anti-monotone map $G(x) = -x$ (right-hand picture).}
\end{figure}

\section{Repulsive Harmonic cost: Monge is equal to Monge-Kantorovich?}\label{sec:example}

Let $\mu$ be an absolutely continuous probability measure with a finite second moment. It is natural to inquire if there exists a Monge-type minimizer for the Monge-Kantorovich problem $(N\geq 3)$
\[
\min_{\gamma\in\Gamma_N(\mu)}\int_{\R^{dN}}c_w(x_1,\dots,x_N)\,\d\gamma(x_1,\dots,x_N),
\] 
or, for the cost $c_h(x_1,\dots,x_N) = h(x_1+\cdots+x_N)$, with $h\colon\R^d\to\R^+$ strictly convex.

A particular case of a theorem of A. Pratelli \cite{Pra07} states that the minimum of the Monge-Kantorovich problem coincides with the infimum of the Monge problem for a large class of cost functions, including Coulomb and $c_h$ cost functions. 

\begin{theorem}[A. Pratelli, \cite{Pra07}] 
Assume that $\mu$ is a Borel probability measure without atoms and $c\colon(\R^d)^N\to\R\cup\lbrace +\infty\rbrace$ is a continuous and bounded from below cost function. Then,
\be\label{prop:pratelli}
\min_{\gamma\in\Gamma_N(\mu)}\int_{\R^{dN}}c\,\d\gamma = \inf \Big\lbrace \int_{\R^{d}}c(x,T_1(x),\dots,T_{N-1}(x))\,\d\mu(x) : \begin{array}{
l} T_{i{\sharp}}\mu=\mu \end{array} \Big\rbrace,
\ee
where $T_i \colon \R^d\to\R^d$ are Borel functions and $i\in \{1,\dots,N-1\}$.
\end{theorem}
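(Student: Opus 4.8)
\medskip
\noindent\textbf{Proof plan.}
That the right-hand side of \eqref{prop:pratelli} is $\ge$ the left-hand side is immediate, since every plan $(\text{id},T_1,\dots,T_{N-1})_\sharp\mu$ with $(T_i)_\sharp\mu=\mu$ belongs to $\Gamma_N(\mu)$; and the left-hand side (the Kantorovich minimum) is attained, because $\Gamma_N(\mu)$ is tight, hence narrowly compact by Prokhorov, and $\gamma\mapsto\int c\,\d\gamma$ is narrowly lower semicontinuous, $c$ being lower semicontinuous and bounded below. So the whole content is the reverse inequality, and the plan is to fix an optimal plan $\bar\gamma$ and, for every $\varepsilon>0$, manufacture $\mu$-preserving Borel maps $T_1,\dots,T_{N-1}$ with $\int c(x,T_1x,\dots,T_{N-1}x)\,\d\mu(x)\le\int c\,\d\bar\gamma+\varepsilon$; one may assume $\int c\,\d\bar\gamma<\infty$, otherwise there is nothing to prove.

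The main device will be a \emph{box decomposition followed by a non-atomic rearrangement}. First I would choose a large compact $K\subset\R^d$ carrying all but an $\varepsilon$-fraction of the mass of $\bar\gamma$, and then partition $\R^d$ into $A_0:=\R^d\setminus K$ together with finitely many Borel pieces covering $K$, each of diameter so small that on every box $B_{\mathbf j}:=A_{j_1}\times\cdots\times A_{j_N}$ avoiding $A_0$ the cost $c$ is bounded and oscillates by at most $\varepsilon$. Set $m_{\mathbf j}:=\bar\gamma(B_{\mathbf j})$. Since every marginal of $\bar\gamma$ equals $\mu$, one has $\sum_{\mathbf j:\,j_k=i}m_{\mathbf j}=\mu(A_i)$ for each coordinate $k$ and each index $i$. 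Then the non-atomicity of $\mu$ would be used twice: first to split each piece $A_{j_1}$, up to a $\mu$-null set, into Borel sets $C_{\mathbf j}$ (over the $\mathbf j$ whose first coordinate is $j_1$) with $\mu(C_{\mathbf j})=m_{\mathbf j}$; and then, for each coordinate $k\ge 2$ and each index $i$, to split $\mu|_{A_i}$ into non-atomic pieces of masses $(m_{\mathbf j})_{j_k=i}$ and, using that any two non-atomic finite Borel measures on $\R^d$ of equal total mass are isomorphic (each being isomorphic to Lebesgue measure on an interval), to define $T_{k-1}$ on $C_{\mathbf j}$ as a measure-preserving Borel map from $\mu|_{C_{\mathbf j}}$ onto the piece sitting in $A_{j_k}$. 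A mass count would then give $(T_{k-1})_\sharp\mu=\mu$, and by construction $x\in C_{\mathbf j}$ forces $(x,T_1x,\dots,T_{N-1}x)\in B_{\mathbf j}$, so the Monge plan $\gamma_\varepsilon:=(\text{id},T_1,\dots,T_{N-1})_\sharp\mu$ would have exactly the same box masses $m_{\mathbf j}$ as $\bar\gamma$. On the boxes avoiding $A_0$ this forces $\int c\,\d\gamma_\varepsilon$ and $\int c\,\d\bar\gamma$ to differ by at most $\varepsilon$; letting $\varepsilon\to0$ would then give the missing inequality, provided the boxes meeting $A_0$ contribute negligibly to both integrals.

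The hard part is exactly that proviso, and it is the real content of Pratelli's theorem: on the boxes meeting $A_0=\R^d\setminus K$ the cost $c$ is uncontrolled, so their contribution to $\int c\,\d\gamma_\varepsilon$ cannot be estimated by ``small mass'' alone. I would remedy this by (i) choosing $K$ relative to a cut-off level $M$ for which $\int_{\{c>M\}}c\,\d\bar\gamma<\varepsilon$, so that the $\bar\gamma$-tail outside $K^N$ is small not merely in mass but in cost; and (ii) building the maps $T_{k-1}$ so that the ``excess'' mass which the marginal constraint forces them to carry across $A_0$ is not routed to wherever $\bar\gamma$ placed it but through a fixed region on which $c$ is bounded --- possible because outside a closed set $c$ is finite and hence locally bounded. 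If $c$ attains the value $+\infty$, I would additionally use that $\{c=+\infty\}$ is closed and $\bar\gamma$-null to keep the approximants inside $\{c<\infty\}$, e.g.\ by absorbing a neighbourhood of $\{c=+\infty\}$ into the ``bad'' region, which has small $\bar\gamma$-cost by (i). With these refinements $\int c\,\d\gamma_\varepsilon\le\int c\,\d\bar\gamma+O(\varepsilon)$, and \eqref{prop:pratelli} follows.
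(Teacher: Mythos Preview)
The paper does not prove this statement: it is quoted as a result of Pratelli \cite{Pra07} and used only as background motivation for the question of whether the infimum on the right-hand side of \eqref{prop:pratelli} is actually attained. There is therefore no ``paper's own proof'' to compare your proposal against.

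That said, your sketch is a reasonable outline of the standard approximation argument behind Pratelli's theorem (discretize the optimal plan on a fine partition, use non-atomicity of $\mu$ and the Borel-isomorphism theorem to realize the discretized plan by maps, then control the tail). Two points deserve care if you want to turn this into an actual proof. First, when you split each $A_{j_1}$ into the sets $C_{\mathbf j}$ you must also ensure that, for each $k\ge 2$, the pushforward $(T_{k-1})_\sharp\mu$ really reassembles to $\mu$ on every $A_i$; your description defines $T_{k-1}$ box by box but does not spell out why the images on the $k$-th side fit together to recover $\mu\restr{A_i}$ rather than some other measure with the correct masses --- you need to fix, for each $k$ and each $i$, a single decomposition of $\mu\restr{A_i}$ and use it as the target. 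Second, your step (ii) for the tail (``route the excess mass through a fixed region on which $c$ is bounded'') is vague: rerouting mass in one coordinate changes the marginal constraints in the others, so you cannot simply send the bad mass wherever $c$ is small without breaking $(T_{k-1})_\sharp\mu=\mu$. Pratelli handles this by a more careful construction in which the approximating maps are built so that the total transported mass that ever leaves $K$ is itself $O(\varepsilon)$, and the contribution of that mass is controlled using continuity of $c$ and an \emph{a priori} bound coming from the optimality of $\bar\gamma$, not by redirecting it.
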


In other words, we are interested in knowing if the above infimum in \eqref{prop:pratelli} is achieved. The following counterexample proves that this is generally not the case for the repulsive harmonic cost $c_w$ or, more generally, for cost functions  $c_h(x_1,\dots, x_N) = h(\sum x_i)$ depending on a convex function $h\colon\R^d\to\R$.

Let us recall the notion of a symmetric transport plan.

\begin{definition}[Symmetric measures]\label{def:symmeasures}
A measure $\gamma \in \mathcal{P}(\R^{dN})$ is symmetric if 
\[
\int_{\R^{dN}}g(x_1,\dots,x_N)\,\d\gamma = \int_{\R^{dN}} g(\sigma(x_1,\dots,x_N))\,\d\gamma, \text{ for all } g \in  \mathcal{C}(\R^{dN})
\]
and for all permutations of $N$ symbols $\sigma \in \mathfrak{S}_N$. We denote by $\Gamma_N^{sym}(\mu)$, the space of all $\gamma \in \Gamma_N(\mu)$ which are symmetric.
\end{definition}

The following result follows by observing that
\begin{itemize} 
\item[(i)]  $ \Gamma_N^{sym}(\mu)\subset \Gamma_N(\mu)$; 
\item[(ii)] if $\gamma \in \Gamma_N(\mu)$ then $\gamma_{sym} = \frac{1}{N!}\sum_{\sigma \in \mathfrak{S}_N}\sigma_{\sharp}\gamma \in  \Gamma_N^{sym}(\mu)$.
\end{itemize}

\begin{proposition}\label{prop:symplans}
If $\mu$ is an absolutely continuous measure in $\R^d$ with finite second moments, then
\begin{equation}\label{eq:MKsymMK}
\min_{ \gamma \in \Gamma_N(\mu) } \int_{\R^{dN}} c_w(x_1,\ldots, x_N) \, \d  \gamma = \min_{ \gamma \in \Gamma_N^{sym}(\mu) } \int_{\R^{dN}} c_w(x_1,\ldots, x_N) \, \d  \gamma. \bigskip
\end{equation}
\end{proposition}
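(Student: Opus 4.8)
The plan is to prove the two inequalities in \eqref{eq:MKsymMK} separately, both of which follow from the two bullet points $(i)$ and $(ii)$ stated immediately before the proposition. The inequality ``$\leq$'' is immediate from $(i)$: since $\Gamma_N^{sym}(\mu)\subset\Gamma_N(\mu)$, minimizing the same functional over the smaller set can only yield a larger (or equal) value, so $\min_{\gamma\in\Gamma_N(\mu)}\int c_w\,\d\gamma \leq \min_{\gamma\in\Gamma_N^{sym}(\mu)}\int c_w\,\d\gamma$. (One should also note that both minima are attained: $\Gamma_N(\mu)$ is weakly-$*$ compact and $c_w$ is continuous and, on the bounded-second-moment class, the integral is weakly-$*$ lower semicontinuous; $\Gamma_N^{sym}(\mu)$ is a weakly-$*$ closed subset, hence also compact, so the right-hand minimum makes sense too.)

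For the reverse inequality ``$\geq$'', I would take an optimal plan $\gamma\in\Gamma_N(\mu)$ achieving the left-hand minimum and form its symmetrization $\gamma_{sym}=\frac{1}{N!}\sum_{\sigma\in\mathfrak{S}_N}\sigma_\sharp\gamma$, which by $(ii)$ lies in $\Gamma_N^{sym}(\mu)$. The key point is that the value of the functional is unchanged under symmetrization: because the cost $c_w(x_1,\dots,x_N)=-\sum_{i<j}|x_i-x_j|^2$ is a symmetric function of its arguments, $c_w\circ\sigma = c_w$ for every $\sigma\in\mathfrak{S}_N$, and hence
\[
\int_{\R^{dN}}c_w\,\d\gamma_{sym} = \frac{1}{N!}\sum_{\sigma\in\mathfrak{S}_N}\int_{\R^{dN}}c_w\,\d(\sigma_\sharp\gamma) = \frac{1}{N!}\sum_{\sigma\in\mathfrak{S}_N}\int_{\R^{dN}}(c_w\circ\sigma)\,\d\gamma = \int_{\R^{dN}}c_w\,\d\gamma.
\]
Thus $\gamma_{sym}$ is an admissible symmetric competitor with the same cost as the optimal $\gamma$, so $\min_{\gamma\in\Gamma_N^{sym}(\mu)}\int c_w\,\d\gamma \leq \int c_w\,\d\gamma_{sym} = \min_{\gamma\in\Gamma_N(\mu)}\int c_w\,\d\gamma$. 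Combining the two inequalities gives equality.

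There is really no serious obstacle here: the only points requiring a word of care are (a) justifying the change of variables $\int c_w\,\d(\sigma_\sharp\gamma)=\int(c_w\circ\sigma)\,\d\gamma$, which is just the definition of pushforward and is valid since $c_w$ is Borel and $\gamma$-integrable (the finite-second-moment hypothesis on $\mu$ guarantees integrability of $c_w$, as $|x_i-x_j|^2\leq 2|x_i|^2+2|x_j|^2$), and (b) checking that $\sigma_\sharp\gamma\in\Gamma_N(\mu)$ for each permutation $\sigma$, which holds precisely because all $N$ marginals of $\gamma$ equal $\mu$ — this is where the equal-marginals assumption enters and is the reason the statement is phrased for $\Gamma_N(\mu)$ rather than for general $\Gamma(\mu_1,\dots,\mu_N)$. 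Everything then assembles into the two-line argument above. I note that the same proof works verbatim for any symmetric cost, in particular for $c_h(x_1,\dots,x_N)=h(x_1+\cdots+x_N)$, since $x_1+\cdots+x_N$ is permutation-invariant.
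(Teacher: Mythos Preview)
Your proof is correct and follows exactly the approach the paper indicates: the paper does not give a formal proof but simply notes that the result follows from the two observations $(i)$ $\Gamma_N^{sym}(\mu)\subset\Gamma_N(\mu)$ and $(ii)$ the symmetrization $\gamma_{sym}=\frac{1}{N!}\sum_\sigma \sigma_\sharp\gamma$ lies in $\Gamma_N^{sym}(\mu)$, which is precisely what you have spelled out (together with the implicit use of the symmetry of $c_w$). Your added remarks on integrability and existence of minimizers are correct and make the argument more complete than the paper's sketch.
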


\subsection{Construction of the counterexample}
The main goal of this section is to exhibit an absolutely continuous measure $\mu$ such that, for the repulsive harmonic problem \eqref{MKRepulsiveHarmonic}, there is no Monge-type minimizer. We recall our main theorem.

\begin{theorem}\label{thm:mainre}
 Given $d \in \N$ there exists an absolutely continuous measure $\mu$ in $\R^d$ with bounded support such that
 there is no minimizer for the quantity
 \begin{equation}\label{eq:mainclaimre}
  \min_{\gamma \in \Gamma_3(\mu)} \int_{\R^{3d}} -|x_1-x_2|^2-|x_1-x_3|^2-|x_2-x_3|^2\,\d \gamma(x_1,x_2,x_3)
 \end{equation}
 that is induced by a map from one of the marginals. Moreover, there is a unique symmetric minimizer.
\end{theorem}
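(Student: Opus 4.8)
The plan is to build the measure $\mu$ on $\R$ first and then lift the construction to $\R^d$ by a product or direct-sum argument. On $\R$, I would write $\mu = \mu^1 + \mu^2$ (after normalization), where each $\mu^k$ is a sum of three ``bumps'' living in three well-separated intervals $I^k_1, I^k_2, I^k_3$. The separation should be chosen large enough that any competitor $\gamma$ with small (very negative) cost must place its three marginals essentially one per interval-cluster; more precisely, using Proposition~\ref{prop:optimalRA}, every optimal plan is concentrated on the hyperplane $\{x_1+x_2+x_3 = k\}$ with $k = 3\int x\,\d\mu$, and the geometry of the three clusters forces each optimal plan to split into finitely many ``branches,'' on each of which the three coordinates are confined to a fixed triple of intervals. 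On each such branch the constraint $x_1+x_2+x_3 = k$ together with the fact that two coordinates determine the third reduces the problem to a two-marginal problem, where I can invoke Corollary~\ref{cor:RAN2} (equivalently Brenier's theorem): the conditional plan must be a graph of an anti-monotone map. The key design feature is to arrange the bumps of $\mu^1$ and $\mu^2$ so that they \emph{overlap} in the base intervals: in the region where $\mu^1$ and $\mu^2$ coexist, the mass must be routed to two different interval-triples, and the two anti-monotone maps dictated by the two branches are \emph{incompatible} as a single function on the overlap. This is exactly what destroys graphicality over any one marginal.

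The steps, in order, would be: (1) fix the three separated interval-clusters and define $\mu^1,\mu^2$ as explicit piecewise-constant densities supported there, with a prescribed overlap; (2) prove a ``localization lemma'' — using the $\{x_1+x_2+x_3=k\}$ characterization and the separation of clusters — that any optimal plan decomposes as a finite convex combination of sub-plans, each with coordinates confined to a fixed permutation of the three clusters; (3) on each sub-plan, use the equality $x_3 = k - x_1 - x_2$ to collapse to a genuine two-marginal transport problem with cost $-|x_1-x_2|^2$ (after absorbing the affine substitution), and apply Corollary~\ref{cor:RAN2} to conclude the sub-plan is supported on the graph of a monotone (here anti-monotone) map between the two relevant marginal pieces; (4) compute, from the marginal constraints that all three marginals equal $\mu = \mu^1+\mu^2$, exactly which map each branch must be — this is where the explicit bump values matter — and observe that on the overlap set the ``first-coordinate'' slices of two different branches carry positive mass and send the same first-coordinate value to two distinct targets, so no map from the first marginal (nor, by the symmetry of the construction, from any marginal) can induce the optimizer; (5) assemble the unique symmetric optimizer as the symmetrization $\gamma_{sym} = \frac{1}{6}\sum_{\sigma\in\mathfrak S_3}\sigma_\sharp\gamma$ of the plan found above, and prove uniqueness by showing the branch structure plus the two-marginal uniqueness from Corollary~\ref{cor:RAN2} pins down $\gamma$ up to permutation, then invoke Proposition~\ref{prop:symplans} to know a symmetric minimizer exists and is optimal; (6) for general $d$, take $\mu$ on $\R^d$ to be (a normalized version of) $\mu_{\R}\otimes\nu$ for a fixed nice $\nu$ on $\R^{d-1}$, or work coordinatewise, noting that the cost $c_w$ and the hyperplane condition split across coordinates, so the one-dimensional obstruction persists in the first coordinate.

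The main obstacle I expect is step (2) together with step (4): one must choose the numerical parameters (interval locations, widths, and the two density levels on the overlap) so simultaneously that (a) the cluster separation genuinely forces the finite-branch decomposition for \emph{every} optimal plan, not just the symmetric one — this requires a careful estimate comparing the cost gained by ``misrouting'' a bit of mass against the cost of the intended configuration — and (b) the marginal-matching equations on the branches are solvable and yield two anti-monotone maps that provably disagree on a set of positive $\mu$-measure. Getting these constraints to be mutually consistent, and verifying that the resulting plan is the \emph{unique} minimizer (so that non-graphicality is not an artifact of a bad choice among many minimizers), is the delicate part; the symmetrization and the passage to higher dimensions should then be comparatively routine, relying only on Proposition~\ref{prop:symplans}, Proposition~\ref{prop:optimalRA}, and Corollary~\ref{cor:RAN2}.
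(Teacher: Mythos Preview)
Your proposal is essentially the paper's proof: the same two-layer construction $\mu=\mu^1+\mu^2$ with three separated clusters and overlap in one of them, the hyperplane characterization (Proposition~\ref{prop:optimalRA}), a localization lemma (Lemma~\ref{lma:discrete}), reduction to a two-marginal problem via Corollary~\ref{cor:RAN2}, and symmetrization for the unique symmetric minimizer.

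Two small remarks. First, your anticipated obstacle in step~(2) is easier than you fear: once a single plan on $\{x_1+x_2+x_3=0\}$ has been exhibited, Proposition~\ref{prop:optimalRA} forces \emph{every} optimizer onto that hyperplane exactly, so the branch decomposition follows by pure arithmetic on $\spt(\mu)$ (checking which triples from the finitely many intervals can sum to zero) --- no cost-comparison estimate for ``misrouted mass'' is needed. Second, the paper organizes the uniqueness argument slightly differently: it first solves the auxiliary asymmetric problem in $\Gamma(\mu_C,\mu_R,\mu_L)$ (Theorem~\ref{thm:mainparts}), and then pushes any optimizer in $\Gamma_3(\mu)$ forward to that setting by a piecewise-permutation map $F$; this makes the uniqueness bookkeeping cleaner than working directly inside $\Gamma_3(\mu)$. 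For $d>1$ the paper takes the $d$-fold coordinatewise product of the one-dimensional construction (your second option), not a tensor with an auxiliary $\nu$.
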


Let us first define the absolutely continuous  measure $\mu$ in $\R$, since all the ideas of the construction are
present already in the one-dimensional case. Then, in Subsection \ref{subsec:dge1} we
give the necessary modifications for the higher dimensions.

The measure $\mu$ can be understood as a sum $\mu = \mu^1 + \mu^2$, where the support of both measures $\mu^k$ consists of three connected components that are separated so that any optimal coupling is forced to have one marginal in each of the components.
Let us now define the connected components.
Let
\[
 C = \left[0,1/2\right],
\]
and for $k \in \{1,2\}$, let 
\[
R^k = \left[3^k, 3^k+1/2\right]
\qquad
\text{and}
\qquad
 L^k = \left[-3^k-1,-3^k \right].
\]
Using these we define $R = R^1 \cup R^2$ and $L = L^1\cup L^2$.

In order to study the structure of the minimizers, let us consider $\mu$ as the a sum 
\begin{equation}\label{mucounterexample}
 \mu = \frac{1}{3}\left(\mu_L + \mu_C + \mu_R\right),
\end{equation}
where different components correspond to the measure on the left, center and right, respectively.
We define these parts as
\begin{equation}\label{mucounterexampleparts}
 \mu_L = \frac12 \mathcal{L}\restr{L}, \qquad \mu_C = 2\mathcal{L}\restr{C}, \qquad \mu_R = \mathcal{L}\restr{R}.
\end{equation}

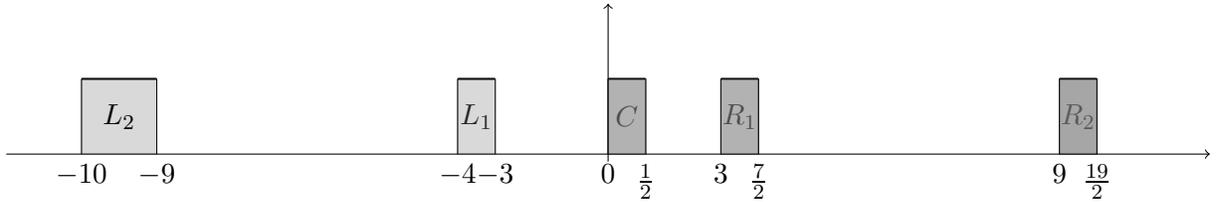
\begin{figure}[http]
\begin{center}
\begin{tikzpicture}[scale=1]
\draw [smooth, samples=100, domain=-7:-6,thick] plot(\x,{1});
\draw [smooth, samples=100, domain=-2:-1.5,thick] plot(\x,{1});

\draw [smooth, samples=100, domain=0:0.5,thick] plot(\x,{1});

\draw [smooth, samples=100, domain=1.5:2,thick] plot(\x,{1});

\draw [smooth, samples=100, domain=6:6.5,thick] plot(\x,{1});

\draw[->] (-8,0) -- (8,0);
\draw[->] (0,-0.1) -- (0,2);

  \def\offset{15};

\draw ( { -7 },0) node[below] {$-10$} -- ( { -7  },{+1)}); 
\draw ( { -6 },0) node[below] {$-9$} -- ( { -6  },{+1)}); 
\draw ( { -2 },0) node[below] {$-4$} -- ( { -2  },{+1)}); 
\draw ( { -1.5 },0) node[below] {$-3$} -- ( { -1.5  },{+1)}); 
\draw ( { 0 },0) node[below] {$0$} -- ( { 0  },{+1)}); 
\draw ( { 0.5 },0) node[below] {$\frac{1}{2}$} -- ( { 0.5  },{+1)}); 
\draw ( { 1.5 },0) node[below] {$3$} -- ( { 1.5  },{+1)}); 
\draw ( { 2 },0) node[below] {$\frac{7}{2}$} -- ( { 2  },{+1)}); 
\draw ( { 6 },0) node[below] {$9$} -- ( { 6  },{+1)}); 
\draw ( { 6.5 },0) node[below] {$\frac{19}{2}$} -- ( { 6.5  },{+1)}); 

\draw[fill=black!50, opacity=0.3] plot[smooth, samples=100, domain= {-7)}:{ -6)}] ( \x,{1}) |- (-7,0) -- cycle;
\node[] at (-6.5,0.5) {$L_2$};
\node[] at (-1.75,0.5) {$L_1$};

\node[] at (0.25,0.5) {$C$};

\node[] at (1.75,0.5) {$R_1$};

\node[] at (6.25,0.5) {$R_2$};

\draw[fill=black!50, opacity=0.3] plot[smooth, samples=100, domain= {-1.5)}:{ -2)}] ( \x,{1}) |- (-1.5,0) -- cycle;

\draw[fill=black!50, opacity=0.6] plot[smooth, samples=100, domain= {0.5)}:{ 0)}] ( \x,{1}) |- (0.5,0) -- cycle;

\draw[fill=black!50, opacity=0.6] plot[smooth, samples=100, domain= {1.5)}:{2)}] ( \x,{1}) |- (1.5,0) -- cycle;

\draw[fill=black!50, opacity=0.7] plot[smooth, samples=100, domain= {6)}:{6.5)}] ( \x,{1}) |- (6,0) -- cycle;

\end{tikzpicture}
\end{center}
\caption{Sketch of the support of $\mu$.}
\end{figure}

\subsection{Unique non-graphical minimizer in $\Gamma(\mu_C,\mu_R,\mu_L)$}

Let us first state a version of Theorem \ref{thm:mainre} where the marginals are given by $\mu_C,\mu_R,\mu_L$.
Theorem \ref{thm:mainparts} contains the main ideas in the proof of Theorem \ref{thm:mainre}; the existence of a unique non-graphical minimizer.

\begin{theorem}\label{thm:mainparts}
 Let $\mu_L,\mu_C,\mu_R$ be the absolutely continuous measures defined in \eqref{mucounterexampleparts}. Then there exists
 a unique  minimizer $\gamma_0$ of
 \begin{equation}\label{eq:mainclaimparts}
  \min_{\gamma \in \Gamma(\mu_C,\mu_R,\mu_L)} \int_{\R^{3d}} -|x_1-x_2|^2-|x_1-x_3|^2-|x_2-x_3|^2\,\d \gamma(x_1,x_2,x_3)
 \end{equation}
 which is given by
 \[
  \gamma_0 = \frac12\sum_{k=1}^2\left((H_k)_\sharp\mathcal{L}\restr{C}\right),
 \]
 where
 \[
 H_k(x) = (x, x+3^k, -2x - 3^k).
\]
In particular, this minimizer is not induced by a map from the first coordinate.
\end{theorem}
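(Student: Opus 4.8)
The plan is to reduce the minimization to the geometric condition of Proposition~\ref{prop:optimalRA}, use the placement of the components $C,R^k,L^k$ to decouple the problem into two pieces, and then kill all remaining freedom with a second–moment identity. First I would record that $\gamma_0$ (understood, up to the obvious normalization, as $\gamma_0=\sum_{k=1}^2(H_k)_\sharp\mathcal L\restr{C}$, which has total mass $1$) lies in $\Gamma(\mu_C,\mu_R,\mu_L)$: the first coordinate of $H_k$ is the identity, so its $x_1$-marginal is $2\mathcal L\restr{C}=\mu_C$, its $x_2$-marginal is $\mathcal L\restr{R^1}+\mathcal L\restr{R^2}=\mu_R$, and (since $x\mapsto-2x-3^k$ doubles lengths) its $x_3$-marginal is $\tfrac12\mathcal L\restr{L^1}+\tfrac12\mathcal L\restr{L^2}=\mu_L$. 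Because $H_k(x)_1+H_k(x)_2+H_k(x)_3=0$ and $\int x\,\d\mu_C+\int x\,\d\mu_R+\int x\,\d\mu_L=\tfrac14+\tfrac{25}{4}-\tfrac{26}{4}=0$, Proposition~\ref{prop:optimalRA} shows that $\gamma_0$ is a minimizer and that $\gamma\in\Gamma(\mu_C,\mu_R,\mu_L)$ is a minimizer \emph{iff} $\spt\gamma\subset\{x_1+x_2+x_3=0\}$. The non-graphicality over $x_1$ is then immediate: over any $x_1\in C$ the support of $\gamma_0$ contains the two distinct points $H_1(x_1)$ and $H_2(x_1)$. So the whole content is to prove that $\{x_1+x_2+x_3=0\}$ determines $\gamma$ uniquely.

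Next I would use the separation of the components. If $\spt\gamma\subset\{x_1+x_2+x_3=0\}$ then on $\spt\gamma$ we have $x_1\in C=[0,1/2]$, $x_2\in R^i$, $x_3\in L^j$ for some $i,j\in\{1,2\}$; a four-case check shows $x_2+x_3$ ranges in $[-1,1/2]$ when $(i,j)\in\{(1,1),(2,2)\}$ but in $[-7,-5.5]$ resp. $[5,6.5]$ for the two mixed pairs, incompatible with $x_1=-(x_2+x_3)\in[0,1/2]$. Hence $\gamma=\gamma_1+\gamma_2$ with $\gamma_k:=\gamma\restr{C\times R^k\times L^k}$, and by disjointness of $R^1,R^2$ and of $L^1,L^2$ the plan $\gamma_k$ has $x_2$-marginal $\mathcal L\restr{R^k}$ and $x_3$-marginal $\tfrac12\mathcal L\restr{L^k}$; write $\eta_k$ for its $x_1$-marginal, so that $\eta_1+\eta_2=\mu_C=2\mathcal L\restr{C}$.

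Finally I would run the rigidity step. Reparametrize $\gamma_k$ by $(u,v):=(x_1,x_2-3^k)\in[0,1/2]^2$; since $\gamma_k$ sits on $\{x_1+x_2+x_3=0\}$ it equals the push-forward of its $(u,v)$-marginal $\rho_k$ under $(u,v)\mapsto(u,\,v+3^k,\,-u-v-3^k)$, and the marginal constraints translate to: $v$-marginal of $\rho_k$ equal to $\mathcal L\restr{[0,1/2]}$; $(u+v)$-marginal equal to $\tfrac12\mathcal L\restr{[0,1]}$ (it is the law of $-x_3-3^k$); $u$-marginal equal to $\eta_k$. Then
\[
\int v^2\,\d\rho_k=\tfrac1{24},\qquad \int(u+v)^2\,\d\rho_k=\tfrac16,\qquad \int u^2\,\d\rho_k=\int u^2\,\d\eta_k=:m_k ,
\]
and expanding the middle integral gives $\int uv\,\d\rho_k=\tfrac1{16}-\tfrac{m_k}2$, hence
\[
0\le\int(u-v)^2\,\d\rho_k=2m_k-\tfrac1{12}.
\]
Summing over $k$ and using $m_1+m_2=\int u^2\,\d(\eta_1+\eta_2)=2\int_0^{1/2}u^2\,\d u=\tfrac1{12}$ forces $m_k=\tfrac1{24}$, so $\int(u-v)^2\,\d\rho_k=0$, i.e. $\rho_k$ is concentrated on the diagonal $\{u=v\}$. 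Translating back, $\gamma_k$ is concentrated on the graph of $H_k$ and $\eta_k=\mathcal L\restr{C}$, whence $\gamma=\gamma_0$.

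The main obstacle is precisely this uniqueness: being supported on the hyperplane $\{x_1+x_2+x_3=0\}$ together with the prescribed outer marginals leaves genuine slack (the Strassen-type domination inequalities between the marginals of the coupling $(x_2,x_3\mapsto\cdot)_\sharp\gamma_k$ are strict), so the coupling is not pinned down by those data alone. Uniqueness is forced only because the shared first marginal $\mu_C$ has density \emph{exactly} $2$, which is the value that turns $m_1+m_2\ge\tfrac1{12}$ into an equality; verifying that the constants $3^1,3^2$ and the densities $2,1,\tfrac12$ are simultaneously tuned so that (a) the three groups of components decouple and (b) $\int x\,\d\mu_C+\int x\,\d\mu_R+\int x\,\d\mu_L=0$ is routine but must be checked carefully. (An alternative to the moment computation is to invoke Brenier's theorem on the two-marginal sub-couplings, but the second-moment identity is shorter and self-contained.)
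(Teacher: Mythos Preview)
Your argument is correct. The optimality and non-graphicality steps coincide with the paper's; the uniqueness argument, however, is genuinely different.

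The paper does not split the coupling into the two layers $C\times R^k\times L^k$ at this stage. Instead it projects an arbitrary minimizer $\gamma$ onto its $(x_2,x_3)$-marginal $\omega\in\Gamma(\mu_R,\mu_L)$ and observes that, since $x_1=-(x_2+x_3)$ on $\spt(\gamma)$, the pushforward of $\omega$ under $(x_2,x_3)\mapsto-(x_2+x_3)$ must equal $\mu_C$; hence $\int|x_2+x_3|^2\,\d\omega=\int x^2\,\d\mu_C$. But this is exactly the minimum of $\omega\mapsto\int|x_2+x_3|^2\,\d\omega$ over $\Gamma(\mu_R,\mu_L)$, and by the two-marginal result (Corollary~\ref{cor:RAN2}) that minimum is attained uniquely by the anti-monotone coupling $\omega_a$. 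Thus $\omega=\omega_a$ and $\gamma=\gamma_0$.

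Your route replaces the appeal to Brenier by the elementary second-moment identity $\int(u-v)^2\,\d\rho_k=2m_k-\tfrac1{12}$ together with $m_1+m_2=\tfrac1{12}$; this is self-contained and makes transparent \emph{why} the density of $\mu_C$ must be exactly $2$ for uniqueness to hold. It also builds in the decoupling $\gamma=\gamma_1+\gamma_2$, which the paper establishes separately (Lemma~\ref{lma:discrete}) only when passing to the symmetric problem. The paper's approach is shorter to state and carries over verbatim to $d\ge 2$ via the componentwise anti-monotone map; your moment computation would need to be done coordinate by coordinate in higher dimensions, which works but is a bit more bookkeeping.
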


\begin{proof}
By construction, the projections of $\gamma_0$ are $\mu_C$, $\mu_R$ and $\mu_L$. Moreover, since for all $k \in \{1,2\}$ and $x \in \R$, writing $H_k(x) = (x_1,x_2,x_3)$, we have
\begin{equation}\label{eq:zerosumpart}
 x_1 + x_2 + x_3 = x + x + 3^k-2x-3^k = 0,
\end{equation}
we get from Proposition \ref{prop:optimalRA} that the measure $\gamma_0$ is a minimizer. 
Since the disintegration of $\gamma_0$ with respect to the projection to the first coordinate is for $\mu_C$-almost every $x \in \R$ unique and equal to
\[
 \frac12\delta_{(x, x+3, -2x - 3)} +  \frac12\delta_{(x, x+9, -2x - 9)},
\]
we conlculde that $\gamma_0$ is not induced by a map from the first coordinate. 

What remains to show is that $\gamma_0$ is the unique minimizer in \eqref{eq:mainclaimparts}. Let $\gamma$ be a minimizer of \eqref{eq:mainclaimparts}. Since for $\gamma_0$  we have by \eqref{eq:zerosumpart} that
\[
 \int_{\R^3}|x_1+x_2+x_3|^2\,\d\gamma_0(x_1,x_2,x_3) = 0,
\]
we conclude that the same must hold for $\gamma$. Consequently, for any $(x_1,x_2,x_3) \in \spt(\gamma)$ we have 
\begin{equation}\label{eq:zerosumagain}
x_1+x_2 + x_3 = 0. 
\end{equation}

The idea of the proof is now to look at the variational problem 
\[
\min \bigg\lbrace \int |x_1+x_3|^2\,\d\omega(x_1,x_3) \,\Big|\, \omega \in \Gamma(\mu_L,\mu_R) \bigg\rbrace.
\]

By Corollary \ref{cor:RAN2}, there exists a unique minimizer $\omega_a = (\text{id},G)_{\sharp}\mu_L$, which is given by the anti-monotone map $G$.
 Let $T(x_1,x_3) = -x_1-x_3$. 
Since $\mathcal{L}\restr{C} = T_\sharp(\omega_a)$  and
 \[
  \int |x_2|^2\,\d T_\sharp(\omega)(x_2) = \int |x_1+x_3|^2\,\d\omega(x_1,x_3),
 \]
 we have that $\omega_a$ is the unique coupling of $\mu_L$ and $\mu_R$ that is pushed to $\mathcal{L}\restr{C}$ by $T$.
 Recall that by Proposition \ref{prop:optimalRA}, we have $x_2 = -x_1-x_3$ for any $(x_1,x_2,x_3) \in \spt(\gamma)$.
Therefore, $\gamma = S_\sharp(\omega_a) = \gamma_0$, with $S(x_1,x_3) = (x_1,-x_1-x_3,x_3)$. We conclude that $\gamma_0$ is the unique minimizer of \eqref{eq:mainclaimparts}. 
\end{proof}

\subsection{Structure of the minimizers in in $\Gamma_3(\mu)$}

Let us now study the minimizers in the case where all the three marginals are $\mu$.
We start with the following lemma.

\begin{lemma}\label{lma:discrete}
 Let $\gamma$ be a minimizer in \eqref{eq:mainclaimre} and $(x_1,x_2,x_3) \in \spt(\gamma)$. Then there exist $k \in \{1,2\}$ and a permutation $\sigma$ of $\{1,2,3\}$ so that
 \begin{equation}\label{eq:onefromeachblock}
  x_{\sigma(1)} \in L^k, \qquad x_{\sigma(2)} \in C, \qquad x_{\sigma(3)} \in R^k.
 \end{equation}
\end{lemma}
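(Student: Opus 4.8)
The plan is to exploit the optimality condition from Proposition \ref{prop:optimalRA}, namely that any minimizer $\gamma$ of \eqref{eq:mainclaimre} is concentrated on the hyperplane $\{x_1+x_2+x_3 = k\}$, where $k = 3\int x\,\d\mu(x)$. First I would compute the barycenter of $\mu$: since $\mu = \frac13(\mu_L+\mu_C+\mu_R)$ with each of $\mu_L,\mu_C,\mu_R$ a probability measure supported on the corresponding blocks, one checks by the symmetry of each component interval that $\int x\,\d\mu_L = \int x\,\d\mu_R$ in absolute value but opposite sign after accounting for the two copies $L^1,L^2$ and $R^1,R^2$; in fact a direct computation gives $k = 0$. (If $k$ turns out to be a small nonzero number, the argument below only needs $|k|$ to be small compared to the block separations, which it is.) So for $\gamma$-a.e. $(x_1,x_2,x_3)$ we have $x_1+x_2+x_3 = k$ with $|k|$ small, say $|k|\le 1$.

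Next I would establish that exactly one coordinate lies in $C=[0,1/2]$, one in $L=L^1\cup L^2$, and one in $R=R^1\cup R^2$. The key quantitative point is the separation of scales: $C$ is near the origin, $R^1 \subset [3,7/2]$, $R^2\subset[9,19/2]$, $L^1\subset[-4,-3]$, $L^2\subset[-10,-9]$. Each $x_i \in L\cup C\cup R$, so each $x_i$ is either in $[-10,-9]\cup[-4,-3]$, in $[0,1/2]$, or in $[3,7/2]\cup[9,19/2]$. I would enumerate the possible "type vectors" (which of $L$, $C$, $R$ each coordinate lies in) and show that the constraint $|x_1+x_2+x_3|\le 1$ forces the type vector to be a permutation of $(L,C,R)$. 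For instance, if two coordinates are in $C$ then the third must be within distance $1$ of $[-1,-1/2]\cup\{0\}$-ish, hence in $C$ as well (all in $[-1,1/2]$), but then all three are in $[0,1/2]$ and the sum is at most $3/2$ and at least $0$ — this already fails $|{\cdot}|\le1$ only near the top, so I would need the bound $k=0$ exactly here, or tighten: the sum of three points in $[0,1/2]$ lies in $[0,3/2]$, and equals $0$ only if all three are $0$, a $\mu$-null event; combined with absolute continuity of $\mu$ this case contributes zero mass. Similarly, configurations with two coordinates in $R$ (sum $\ge 6$, third coordinate $\ge -10$, so sum $\ge -4$, but actually the third must cancel $\approx 6$ to $19$, impossible since coordinates are $\ge -10$ only barely — here the scale gaps $3^k$ do the work), or two in $L$, or all three in the same block, are ruled out by crude interval arithmetic. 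This leaves only the permutations of $(L,C,R)$, with the refinement that the $L$-coordinate and $R$-coordinate must sit at the \emph{same} scale $k$: if $x_{\sigma(1)}\in L^1$ and $x_{\sigma(3)}\in R^2$ then $x_{\sigma(1)}+x_{\sigma(3)} \in [-4+9,-3+19/2] = [5, 13/2]$, forcing $x_{\sigma(2)}\in[-13/2,-5]$, which is disjoint from $C$; likewise $L^2$ with $R^1$ gives $x_{\sigma(1)}+x_{\sigma(3)}\in[-7,-11/2]$, again incompatible with $x_{\sigma(2)}\in C$. Hence the matching scales $L^k$ and $R^k$ with the same $k$.

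The only real subtlety is the bookkeeping: I must confirm that $k=0$ (or is small enough), and I must be careful that the "all three in $C$" near-degenerate case is genuinely negligible — this is handled by noting such points would need $x_1=x_2=x_3=k/3$, and since $\mu$ is absolutely continuous the diagonal is $\gamma$-null for \emph{any} coupling, so after removing a $\gamma$-null set the conclusion \eqref{eq:onefromeachblock} holds at every point of (the essential) $\spt(\gamma)$; if one wants it literally on $\spt(\gamma)$, one uses that $\spt(\gamma)$ is closed and the complement of the bad configurations is relatively open and full measure, hence dense, so the closed support is contained in its closure, which is the union of the six closed "sheets" $\{x_{\sigma(1)}\in L^k, x_{\sigma(2)}\in C, x_{\sigma(3)}\in R^k\}$. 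I expect the main obstacle to be purely the careful case analysis over type vectors together with pinning down the value of $k$; once $k$ is known the interval arithmetic is routine. Everything else is a direct application of Proposition \ref{prop:optimalRA} and the explicit construction of $\mu$.
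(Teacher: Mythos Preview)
Your approach is correct and essentially the same as the paper's: both reduce to $x_1+x_2+x_3=0$ on $\spt(\gamma)$ via Proposition~\ref{prop:optimalRA}, then do block-by-block interval arithmetic to force one coordinate into each of $L^k$, $C$, $R^k$, with the degenerate point $(0,0,0)$ handled by its isolation in $\spt(\mu)^3\cap\{x_1+x_2+x_3=0\}$; the paper merely organizes the cases by assuming $x_1\le x_2\le x_3$ rather than enumerating type vectors, and excludes $(0,0,0)$ by the same density-of-support argument you sketch. One small correction: your symmetry heuristic for $k=0$ is not quite right, since $\int x\,\d\mu_L=-\tfrac{13}{2}$ and $\int x\,\d\mu_R=\tfrac{25}{4}$ are not negatives of each other, but the direct computation $\tfrac13\bigl(-\tfrac{13}{2}+\tfrac14+\tfrac{25}{4}\bigr)=0$ does give $k=0$ exactly, which you correctly identify as needed for the $C^3$ case.
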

\begin{proof}
 We prove the claim by ruling out the other possibilities. 
 By applying a permutation, if necessary, we may assume that $x_1 \le x_2 \le x_3$.
 
 If $x_1,x_2,x_3 \in C \cup R^1 \cup R^2$, then \eqref{eq:zerosumagain} forces $x_1 = x_ 2 = x_3 = 0$. 
 Then by the definition of the support and the fact that $\mu(\{0\}) = 0$, there
 exist points $(\tilde x_1,\tilde x_2,\tilde x_3) \in \spt(\gamma)\setminus \{(x_1,x_2,x_3)\}$ arbitrarily close to $(x_1,x_2,x_3)$. For close enough points, the form of $\mu$ then gives that $(\tilde x_1,\tilde x_2,\tilde x_3) \in C^3$ and thus $\tilde x_1 +\tilde x_2+ \tilde x_3 > 0$, contradicting \eqref{eq:zerosumagain}. Thus  $x_1 \in L^1 \cup L^2$.
 
 If now $x_2 \in L^1 \cup L^2$, then by \eqref{eq:zerosumagain},
 \[
  x_3 \in [6,8] \cup [12,14] \cup [18,20].
 \]
 But then, $x_3 \notin \spt(\mu)$, giving a contradiction. Therefore, $x_2,x_3 \in C \cup R^1 \cup R^2$.

 If $x_2,x_3 \in C$, then $x_1+x_2+x_3 < 0$. Thus, $x_3 \in R^1 \cup R^2$. In order to $x_2 \in \spt(\mu)$, the only possibility is then to have
 $x_1 \in L^1$ and $x_3 \in R^1$, or $x_1 \in L^2$ and $x_3 \in R^2$. In both cases, one must have $x_2 \in C$. This proves the claim.
\end{proof}

\begin{proof}[Proof of Theorem \ref{thm:mainre}]
Let $\gamma$ be a minimizer in \eqref{eq:mainclaimre}. Let us translate the problem to the setting of Theorem \ref{thm:mainparts} using the following mapping $F \colon \R^3 \to \R^3$ defined as
\[
F((x_1,x_2,x_3)) = 
\begin{cases}
 (x_3,x_2,x_1), & \text{if }(x_1,x_2,x_3) \in L \times R \times C,\\
 (x_3,x_1,x_2), & \text{if }(x_1,x_2,x_3) \in R \times L \times C,\\
 (x_2,x_3,x_1), & \text{if }(x_1,x_2,x_3) \in L \times C \times R,\\
 (x_2,x_1,x_3), & \text{if }(x_1,x_2,x_3) \in R \times C \times L,\\
 (x_1,x_3,x_2), & \text{if }(x_1,x_2,x_3) \in C \times L \times R,\\
 (x_1,x_2,x_3), & \text{otherwise}.
\end{cases}
\]
The map $F$ permutes the coordinates differently in different parts of the space so that $\gamma$-almost every point will be mapped into $C \times R \times L$, due to Lemma \ref{lma:discrete}.
Since $F$ is pointwise just a permutation, we have that $\gamma$-almost every $(x_1,x_2,x_2) \in \R^3$ the point $(y_1,y_2,y_3) = F((x_1,x_2,x_3))$ satisfies
$$y_1 + y_2 + y_3 = 0.$$ Therefore, by Proposition \ref{prop:optimalRA}, the measure $F_\sharp \gamma$ is an optimal coupling (for its marginal measures).
We claim that $F_\sharp \gamma \in \Pi(\mu_C,\mu_R,\mu_L)$. 
Let us check this for the first marginal. Let $A \subset \R$. Then by Lemma \ref{lma:discrete} and the definition of $F$, we get
\begin{align*}
F_\sharp\gamma (A\times \R\times \R) & = F_\sharp\gamma((A\cap C)\times \R\times \R) \\
& = \gamma((A \cap C)\times \R \times \R) + \gamma(\R \times (A \cap C)\times \R) + \gamma( \R \times \R \times (A \cap C))\\
& = 3 \mu(A \cap C) = \mu_C(A).
\end{align*}
The claim for the other marginals follows similarly.

Thus, due to the optimality of $F_\sharp \gamma$, from Theorem \ref{thm:mainparts} we conclude that $F_\sharp \gamma = \gamma_0$ with the measure $\gamma_0$ defined in Theorem \ref{thm:mainparts}. Now, the form of $\spt(\gamma_0)$ forces
\begin{equation}\label{eq:force1}
(\mathtt{p}_2)_\sharp(\gamma\restr{R\times \R \times \R}) + (\mathtt{p}_3)_\sharp(\gamma\restr{R\times \R \times \R}) = \frac12 \mu\restr{C \cup L},
\end{equation}
where $\mathtt{p}_i$ is the projection to the $i$:th coordinate. Similarly,
\begin{equation}\label{eq:force2}
(\mathtt{p}_2)_\sharp(\gamma\restr{L\times \R \times \R}) + (\mathtt{p}_3)_\sharp(\gamma\restr{L\times \R \times \R}) = \frac12 \mu\restr{C \cup R}.
\end{equation}
Together \eqref{eq:force1} and \eqref{eq:force2} imply
\[
(\mathtt{p}_2)_\sharp(\gamma\restr{C\times \R \times \R}) + (\mathtt{p}_3)_\sharp(\gamma\restr{C\times \R \times \R}) = \frac12 \mu\restr{L \cup R}.
\]
Therefore, again from the form of $\gamma_0$, we conclude that for $\mu$-almost every $x\in C$ the disintegration of 
$$(\mathtt{p}_1,\mathtt{p}_2)_\sharp(\gamma\restr{C\times \R \times \R}) + (\mathtt{p}_1,\mathtt{p}_3)_\sharp(\gamma\restr{C\times \R \times \R})$$
 with respect to the projection on the first coordinate gives positive measure for at least 4 points and thus $\gamma\restr{C}$ is not induced by a map from the first coordinate. Since the argument is symmetric with respect to the marginals, we conclude that $\gamma$ is not induced by a map from any of the coordinates.

Let us then show the uniqueness of the symmetric minimizer in \eqref{eq:mainclaimre}. 
First of all, by Proposition \ref{prop:symplans} we already know that there exists a symmetric minimizer.
We will show that the unique symmetric minimizer is the plan $\gamma_1 \in \Gamma_3^{sym}(\mu)$ defined by
\[ 
\gamma_1 = \frac{1}{6}\sum_\sigma \sigma_\sharp\gamma_0,
\]
where the sum is taken over all permutations of the marginals $\sigma \colon \R^{3} \to \R^{3}$.
Since $x_1+x_2+x_3 = 0$ for $\gamma_1$-almost every $(x_1,x_2,x_3)$, by Proposition \ref{prop:optimalRA} the plan $\gamma_1$ is a minimizer.
Now, let $\gamma \in \Gamma_3^{sym}(\mu)$ be a minimizer of \eqref{eq:mainclaimre}. 
By symmetry, we know that 
\[
\sigma_\sharp (\gamma\restr{\sigma^{-1}(C\times R \times L)}) = \gamma\restr{C\times R \times L}
\]
for all permutations $\sigma$ of the marginals. Thus by Lemma \ref{lma:discrete} and the fact that $F_\sharp \gamma = \gamma_0$, we get
\[
 \gamma_0 = \sum_\sigma \sigma_\sharp (\gamma\restr{\sigma^{-1}(C\times R \times L)}) = 6\gamma\restr{C\times R \times L},
\]
implying $\gamma = \gamma_1$.
\end{proof}

\subsection{The higher-dimensional case}\label{subsec:dge1}

Let us  generalize the previous example to the higher dimensions. 
Using the same notation $C$, $L^k$, $R^k$ for $k\in \{1,2\}$ as in the one dimensional case, we write
\[C_d=C^d,~~L_d^{k}=(L^k)^d\,,\text{ and }R_d^{k}=(R^k)^d\,\]
and
\[
L_d = L_d^{1}\cup L_d^2, \text{ and }R_d = R_d^{1}\cup R_d^2.
\]
Let us denote by $\mathcal{L}$  the Lebesgue measure on $\R^d$. 
The marginal measures are  now
\[
 \mu_L = \frac12 \mathcal{L}\restr{L_d}, \qquad \mu_C = 2^d\mathcal{L}\restr{C_d}, \qquad \mu_R = 2^{d-1}\mathcal{L}\restr{R_d}.
\]
The optimal coupling $\gamma_0$ in $\Pi(\mu_C,\mu_R,\mu_L)$ now reads
\[
\gamma_0 = \sum_{k=1}^2\left((H_k)_\sharp 2^{d-1}\mathcal{L}\restr{{C_d}}\right),
\]
where for $k\in\lbrace 1,2\rbrace$, the maps $H_k\colon\R^d \to \R^{3d}$ are given by 
\[T_k(x)=(x,x+(3^k, \dots, 3^k),-2 x - (3^k, \dots, 3^k))\,.\]
We again have the optimality conditition \eqref{eq:optimality}. This gives us that for each minimizer $\gamma$ of $c$ we must have, for each point $(x_1,x_2, x_3)\in \spt(\gamma)$ the equality
\begin{equation*}\label{eq:componentsum0}
x_1+x_2+x_3=0.
\end{equation*}

The uniqueness of optimal $\gamma_0 \in \Pi(\mu_C,\mu_R,\mu_L)$ follows as in the proof of Theorem \ref{thm:mainparts} using the functional $F\colon\Gamma(\mu_{L},\mu_{R})\to \R$
\[F(\omega) 
=\int_{\R^{2d}}\left(|x_1^1+x_3^1|^2+ \cdots +|x_1^d+x_3^d|^2\right)\d\omega(x_1,x_3)\,,\]
and the component-wise anti-monotone transport $\omega_a$ of $\mu_{L}$ to $\mu_{R}$.

The analysis of the minimizers with marginals $\mu = \frac13(\mu_C + \mu_L + \mu_R)$, Lemma \ref{lma:discrete} and the proof of Theorem \ref{thm:mainre}
go as in the one dimensional case, by component-wise considerations.

\bibliography{refsAcademy}
\bibliographystyle{siam}

\end{document}